\newtheorem{theorem}{Theorem}[section]
\newtheorem*{theorem*}{Theorem}
\newtheorem{lemma}{Lemma}[section]
\theoremstyle{definition}
\newtheorem{definition}{\bf Definition}[section]
\newtheorem*{definition*}{\bf Definition}
\newtheorem{example}{\bf Example}[section]
\newtheorem{remark}{Remark}
\newtheorem*{remark*}{Remark}
\newtheorem*{example*}{\bf Example}
\begin{document}

	\title[]{On the Fountain Theorem for Continuous Functionals and Its Application to a Semilinear Elliptic Problem in $\mathbb{R}^2$}

	\author{Ablanvi Songo$^{1,a}$}

\address{$^a$Universit\'{e} de Sherbrooke, D\'{e}partement de math\'{e}matiques, Sherbrooke, Qu\'{e}bec, Canada}

\email{\href{mailto:ablanvi.songo@usherbrooke.ca}{{\textcolor{blue}{ablanvi.songo@usherbrooke.ca}}}}
\footnote{Corresponding author}
\author{ Fabrice Colin${^b}$}

\address{$^b$Laurentian University, School of Engineering and Computer Science, Sudbury, Ontario, Canada}

\email{\href{mailto:fcolin@laurentian.ca}{{\textcolor{blue}{fcolin@laurentian.ca}}}}

\keywords{Variational methods, multiple solutions, continuous functionals, Fountain theorem, multiple solitary waves}

\subjclass[2020]{35A15; 35J61; 58E05}

	\begin{abstract}
		In this work, we establish a continuous version of the Fountain theorem by using the framework of the weak slope for continuous functionals, which generalizes Theorem 3.6 of Willem \cite{Wi}. Then we present an application to a semilinear problem. 
	\end{abstract}
	
	\maketitle

	\section{Introduction}
	Many papers are devoted to the study of critical point theory for nonsmooth functionals, see, for example, \cite{CD, CDM, CD}, \cite{Gu}, \cite{Ch}, \cite{KV}. In this paper, we use the deformation lemma for continuous functions which are invariant under a topological group found in \cite{KV} to prove the Willem minimax principle theorem \cite[Theorem 3.5]{Wi} for continuous functionals. With the aid of this result, we give a nonsmooth version of the Fountain theorem (see Theorem 3.2) which generalizes the Fountain theorem of Willem \cite[Theorem 3.6]{Wi}.
    
    As application, we study the existence of multiple solutions of the following semilinear equation
		\begin{equation*}
		(P)
		\begin{cases}
			-\Delta u + u = f(x,u),\\
			u\in H^1(\mathbb{R}^2),
		\end{cases}
	\end{equation*} 
	where the nonlinearity $f \in \mathcal{C}(\mathbb{R}^2 \times \mathbb{R}, \mathbb{R})$ satisfies some growth conditions, $\Delta$ is the Laplace operator, and the space $H^1(\mathbb{R}^2)$ is a Hilbert space. The main difficulty here lies in the lack of compactness due to the unboundedness of the domain, as well as the fact that the nonlinear term $f(x,u)$ exhibits subcritical exponential growth in the sense of the Trudinger–Moser inequality. By applying our main result (see Theorem 3.2) together with the Trudinger–Moser inequality \cite{OS}, we prove that problem $(P)$ admits multiple solutions.\\
    
	The paper is organized as follows. In Section 2, we present the concept of the weak slope for continuous functions. In Section 3, we give the main result of this paper (see Theorem 3.2). In Section 4, we give an application of our main result, that is, a continuous version of the Fountain theorem. 
	
	\section{Preliminaries}
 In this section we give some preliminary information and results which will be used later.\\ Let $(E,d)$ be a metric space endowed with the metric $d$. We recall some definitions and results from the paper \cite{DM}, see also \cite{CD, CDM}.
	\begin{definition}
		Let $f: E \to \mathbb{R}$ be a continuous function and let $u\in E$. We denote by $|df|(u)$ the supremum of $\sigma \in [0, \infty)$ such that there exist $\delta >0$ and a continuous map 
		\begin{equation*}
			\mathcal{H} : B(u,\delta)\times [0,\delta] \rightarrow E
		\end{equation*}
		such that  for all  $v \in B(u,\delta)$, for all $ t\in [0,\delta]$ we have 
		\begin{enumerate}
	\item $d(\mathcal{H}(v,t),v) \le t$,
	\item  $f(\mathcal{H}(v,t))\le f(v) -\sigma t$,
		\end{enumerate}
		where \[B(u,r) := \Big\{w\in E\;|\; d(u,w)<r\Big\}\;\;\text{with} \;\;  r>0.\] The extended real number $|df|(u)$ is called the weak slope of $f$ at $u$.
	\end{definition}
	\begin{definition}
		Let $f: E \to \mathbb{R}$ be a continuous function.
	 We say that $u\in E$ is a critical point for $f$ if $|df|(u) = 0$. A real number $c$ is said to be a critical value for $f$, if there exists $u\in E$ such that $|df|(u)=0$ and $f(u)=c$.
	
	\end{definition}
	
	\begin{definition}[see \cite{DM}]
		Let $f: E \to \mathbb{R}$ be a continuous function and $c \in \mathbb{R}$. We say that $f$ satisfies the Palais-Smale condition at level $c$ ($(PS)_c$ for short), if from every sequence $(u_h)$ in $E$ with $|df|(u_h)\to 0$ and $f(u_h)\to c$ as $h\to \infty$ it is possible to extract a subsequence $(u_{h_j})$ converging in $E$ (the limit of $(u_{h_j})$ is necessarily a critical point for $f$, see \cite[Proposition 2.6]{DM} ).
	\end{definition}
	Let $G$ be a group of isometries of $E$, that is 
	\begin{equation*}
	G:= \Big\{g: E\to E\;|\; d(g(u),g(v))=d(u,v), \;\;\text{for all}\;\; u, v\in E \Big \}.
	\end{equation*}

	We say that 
	\begin{equation*}
		A\subset E\;\; \text{is}\;\; G-\text{invariant if}\;\; g(A)=A \;\; \text{for all}\;\; g \in G,
	\end{equation*}
	\begin{equation*}
		h: E\to \mathbb{R}\;\; \text{is}\;\; G-\text{invariant if}\;\; h\circ g= h \;\; \text{for all}\;\; g \in G,
	\end{equation*}
	\begin{equation*}
		h: E\to E \;\; \text{is}\;\; G-\text{equivariant if}\;\; h\circ g= g\circ h \;\; \text{for all}\;\; g \in G.
	\end{equation*}
	\begin{definition}[Weak slope for the equivariant case]
	Let $f: E \to \mathbb{R}$ be a continuous $G-$invariant function. For every $u\in E$ we denote by $|d_Gf|(u)$ the supremum of $\sigma \in [0, \infty)$ such that there exist an $G-$invariant neighborhood $U$ of $u$, as well as $\delta >0$ and a continuous map 
		\begin{equation*}
			\mathcal{H} : U\times [0,\delta] \rightarrow E
		\end{equation*}
		such that  for all  $v \in U$, for all $ t\in [0,\delta]$ we have 
		\begin{enumerate}
			\item  $\mathcal{H}(.,t)$ is $G-$equivariant for all $t\in [0,\delta]$,
			\item $d(\mathcal{H}(v,t),v) \le t$,
			\item  $f(\mathcal{H}(v,t))\le f(v) -\sigma t$.
		\end{enumerate}
		The extended real number $|d_Gf|(u)$ is called the equivariant weak slope of $f$ at $u$.
	\end{definition}
    \begin{remark}
        We have \[|d_Gf|(u)\le |df|(u).\]
    \end{remark}
	\begin{definition}
			Let $f: E \to \mathbb{R}$ be a continuous $G-$invariant function.
		We say that $u\in E$ is a $G-$critical point for $f$ if $|d_Gf|(u) = 0$. A real number $c$ is said to be a $G-$critical value for $f$, if there exists $u\in E$ such that $|d_Gf|(u)=0$ and $f(u)=c$.
		
	\end{definition}
	\begin{definition}[see \cite{CD}]
    \label{def 2.6}
	Let	$f: E \to \mathbb{R}$ be a continuous $G-$invariant function. Let $c\in \mathbb{R}$. We say that $f$ satisfies the $G-$Palais-Smale condition ($G-(PS)_c$ for short) if from every sequence $(u_h)$ in $E$ with $|d_Gf|(u_h)\to 0$ and $f(u_h)\to c$ it is possible to extract a subsequence $(u_{h_j})$ converging in $E$ (the limit of $(u_{h_j})$ is necessarily a $G-$critical point for $f$, see \cite[Theorem 1.2.2]{CD}).
	\end{definition}
	
The following theorem is from  \cite[Theorem 1.1.2]{CD} and is crucial to prove that the energy functional associated with the problem fulfills the $G-(PS)_c$ condition. 
\begin{theorem}
	\label{theo 2.1}
	Let $X$ be an open subset of a normed space $E$ and let $u\in X$, $v\in X$, and $f: E \to \mathbb{R}$ be a continuous function. For any $w\in X$, set
	\begin{equation*}
	\overline{D}_{+} f(w)(v):= \underset{t\to 0^+}{\limsup}\; \dfrac{f(w+tv)-f(w)}{t}.
	\end{equation*}
	Then we have 
	\begin{equation*}
		\|v\||df|(u)\ge -\underset{w\to u}{\limsup}\;\overline{D}_{+} f(w)(v).\\
	\end{equation*}
\end{theorem}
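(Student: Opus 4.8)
The plan is to prove the equivalent inequality $|df|(u)\ge \alpha/\|v\|$, where $\alpha := -\limsup_{w\to u}\overline{D}_{+}f(w)(v)$, by exhibiting, for every $\rho<\alpha$, an explicit deformation admissible in the definition of the weak slope. First I would dispose of the trivial cases: if $v=0$ both sides vanish, and if $\alpha\le 0$ the right-hand side is nonpositive while the left-hand side is nonnegative, so there is nothing to prove. Hence assume $v\ne 0$ and $\alpha>0$, and fix an arbitrary $\rho\in(0,\alpha)$.

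Next, since $\limsup_{w\to u}\overline{D}_{+}f(w)(v)=-\alpha<-\rho$, the definition of the upper limit furnishes $\delta_0>0$ with $B(u,\delta_0)\subseteq X$ such that
\[
\overline{D}_{+}f(w)(v)=\limsup_{t\to 0^+}\frac{f(w+tv)-f(w)}{t}<-\rho\qquad\text{for every }w\in B(u,\delta_0).
\]
The heart of the matter is to convert this pointwise bound on the upper right Dini derivative into a uniform finite-step decay estimate along $v$. For fixed $w$ and $\tau>0$ with the whole segment $\{w+tv:t\in[0,\tau]\}$ contained in $B(u,\delta_0)$, consider the continuous scalar function $\phi(t):=f(w+tv)$ on $[0,\tau]$. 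Its upper right Dini derivative equals $\overline{D}_{+}\phi(t)=\overline{D}_{+}f(w+tv)(v)<-\rho$ for $t\in[0,\tau)$, so the classical increasing-function theorem for Dini derivatives (the continuous map $t\mapsto\phi(t)+\rho t$ has nonpositive upper right Dini derivative, hence is nonincreasing) yields $\phi(\tau)-\phi(0)\le-\rho\,\tau$, that is,
\[
f(w+\tau v)\le f(w)-\rho\,\tau.
\]

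With this estimate in hand I would build the deformation by the straight-line homotopy. Choosing $\delta>0$ with $2\delta<\delta_0$ and setting $\mathcal{H}(w,s):=w+\frac{s}{\|v\|}\,v$, one checks that for $w\in B(u,\delta)$ and $s\in[0,\delta]$ the segment joining $w$ to $\mathcal{H}(w,s)$ stays in $B(u,\delta_0)$, since $\|w+tv-u\|\le\|w-u\|+t\|v\|<2\delta<\delta_0$ for $t\in[0,s/\|v\|]$. The map $\mathcal{H}$ is continuous and satisfies $\|\mathcal{H}(w,s)-w\|=s$, while the step estimate with $\tau=s/\|v\|$ gives $f(\mathcal{H}(w,s))\le f(w)-\frac{\rho}{\|v\|}\,s$. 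Thus $\mathcal{H}$ is admissible for $\sigma=\rho/\|v\|$ in the definition of the weak slope, whence $|df|(u)\ge\rho/\|v\|$. Letting $\rho\uparrow\alpha$ gives $\|v\|\,|df|(u)\ge\alpha$, which is the claim.

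I expect the main obstacle to be precisely this increasing-function step, i.e. promoting the infinitesimal Dini decay to a genuine finite decrement $-\rho\tau$ \emph{uniformly} over the ball; reading $f(w+\tau v)\le f(w)-\rho\tau$ directly off the $\limsup$ only gives a threshold $\tau_w$ depending on $w$, and it is the mean-value/increasing-function theorem for the upper Dini derivative that supplies the uniformity. A secondary technical point is whether the neighborhood produced by $\limsup_{w\to u}$ contains the center $u$: if the convention punctures $u$, a segment may meet $u$ at a single parameter value, but this is harmless since the increasing-function theorem tolerates a countable exceptional set for a continuous function (alternatively, one first proves the inequality for $w$ off the ray $u-\mathbb{R}_{+}v$ and extends to the remaining $w$ by continuity of $f$ and $\mathcal{H}$).
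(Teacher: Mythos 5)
The paper does not prove this statement at all: Theorem \ref{theo 2.1} is quoted verbatim from Canino--Degiovanni \cite[Theorem 1.1.2]{CD}, so there is no in-paper proof to compare against. Your argument is correct and is essentially the standard proof from that reference: reduce to $\alpha>0$, localize the Dini-derivative bound $\overline{D}_{+}f(w)(v)<-\rho$ to a ball, promote it to the finite decrement $f(w+\tau v)\le f(w)-\rho\tau$ via the increasing-function theorem for upper right Dini derivatives of continuous functions, and feed the straight-line homotopy $\mathcal{H}(w,s)=w+\tfrac{s}{\|v\|}v$ into the definition of the weak slope to get $|df|(u)\ge\rho/\|v\|$ for every $\rho<\alpha$. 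Your closing remarks correctly identify the only two delicate points (uniformity of the decrement over the ball, and the possibly punctured neighborhood coming from $\limsup_{w\to u}$) and both fixes you propose are sound.
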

	\begin{remark}
    \label{rmk 2}
	If the function $f$ is $G-$invariant with $G= \mathbb{Z}_2 \simeq \{id, -id\}$, then \[|d_{\mathbb{Z}_2}f|(u)=|df|(u).\] Therefore, Theorem $\ref{theo 2.1}$ is also valid in this case.
	\end{remark}

   Let $S \subset E$ and $f : E \rightarrow \mathbb{R}$ be a continuous $G-$invariant functional, where $(E,d)$ is a metric space on which a compact Lie group $G$ acts by isometric transformations (a $G-$metric space for short).

    We denote by 
   \begin{equation*}
        d(u, S):= \|u-S\|.
   \end{equation*}
   For every $\alpha$, $\beta \in \mathbb{R}$, denote
   \begin{equation*}
       S_{\alpha} := \{u \in E \;|\; d(u,S) \le \alpha\},\;\;
       f_{\beta} := \{u \in E \;|\; f(u)\ge \beta \},
\end{equation*}
       \begin{equation*}
       f^{\alpha} :=\{u \in E \;|\; f(u)\le \alpha \}, \;\;
       f_\beta^{\alpha}:= f_\beta \cap f^\alpha.
   \end{equation*}

We recall the following equivariant deformation lemma from \cite[Thorem 4.4]{KV} for continuous and $G-$invariant functions which will allow us to generalize the general minimax principle for continuous functionals.

\begin{theorem}[Theorem 4.4, \cite{KV}]
	\label{theo 2.2}
	Let $(E,d)$ be a complete $G-$metric space, $f:E\to \mathbb{R}$ be a continuous $G-$invariant function, $C$ a closed $G-$invariant subset in $E$ and $c\in \mathbb{R}$. Let $\varepsilon>0$ be such that 
	\begin{equation*}
		\text{for every}\;\; u\in f^{-1}\Big([c-2\varepsilon, c+2\varepsilon]\Big)\cap C_{2\varepsilon}, \;\; \text{we have}\;\; |d_Gf|(u)>\varepsilon.
	\end{equation*}
	Then, there exist $\varepsilon' \in (0,\varepsilon)$, as well as $\lambda>0$ and a continuous $G-$equivariant map 
    \begin{equation*}
        \eta: [0,1]\times E \to E
    \end{equation*} such that\\
	$(a)$ $d(\eta(t,u),u)\le \lambda t$, for every $t\in [0,1]$;\\
	$(b)$ $f(\eta(t,u))\le f(u)$, for every $t\in [0,1]$ and $u\in E$;\\
	$(c)$ if $u\notin f^{-1}\Big([c-2\varepsilon, c+2\varepsilon]\Big) \cap C_{2\varepsilon}$, then $\eta(t,u)=u$, for every $t\in [0,1]$;\\
	$(d)$ $\eta(1, f^{c+\varepsilon'}\cap C)\subset f^{c-\varepsilon'}$ with $\varepsilon' =\frac{\varepsilon}{2\sqrt{1+\varepsilon^2}}$; \\
	$(e)$ for every $t\in]0,1]$ and for every $u\in f^c \cap C$ we have $f(\eta(t,u))<c$.
\end{theorem}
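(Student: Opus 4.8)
The plan is to build the deformation $\eta$ by gluing together local descent homotopies extracted from the hypothesis $|d_Gf|>\varepsilon$, and then to read off the quantitative estimates. Write $A := f^{-1}([c-2\varepsilon,c+2\varepsilon]) \cap C_{2\varepsilon}$ for the ``active'' region. By hypothesis $|d_Gf|(u) > \varepsilon$ at every $u \in A$, so the definition of the equivariant weak slope $|d_Gf|$ supplies, for each such $u$, a number $\sigma_u > \varepsilon$, a $G$-invariant neighborhood $U_u$, a $\delta_u > 0$, and a continuous $G$-equivariant map $\mathcal H_u : U_u \times [0,\delta_u] \to E$ with $d(\mathcal H_u(v,t),v) \le t$ and $f(\mathcal H_u(v,t)) \le f(v) - \sigma_u t$ for all $(v,t)$. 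These local maps are the pieces to be assembled; on the complement of $A$ I would simply take the trivial homotopy.

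First I would localize. Since $(E,d)$ is a metric space it is paracompact, so the open cover $\{U_u\}_{u\in A}$ of a neighborhood of $A$ admits a locally finite open refinement together with a subordinate partition of unity $\{\psi_i\}$. To preserve the symmetry I would average each $\psi_i$ over the compact Lie group $G$ against its normalized Haar measure, replacing $\psi_i$ by $u \mapsto \int_G \psi_i(gu)\,dg$; because $G$ acts by isometries this yields a $G$-invariant locally finite partition of unity subordinate to a $G$-invariant refinement. In parallel I would fix a $G$-invariant Lipschitz cutoff $\chi : E \to [0,1]$ equal to $1$ on $f^{-1}([c-\varepsilon,c+\varepsilon]) \cap C_\varepsilon$ and vanishing off $A$; this is possible because $f$ and $u\mapsto d(u,C)$ are continuous and $G$-invariant, so the relevant sublevel and tubular sets are $G$-invariant, and $\chi$ both enforces property $(c)$ and confines all motion to $A$.

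Next I would glue. Because $E$ is only a metric space, convex combinations $\sum_i \psi_i(v)\mathcal H_i(v,\cdot)$ are unavailable, so instead of averaging the homotopies I would \emph{concatenate} them: near any point only finitely many $\psi_i$ are nonzero, and I would run the corresponding local maps successively, each for a time proportional to $\chi(v)\psi_i(v)$, composing the resulting displacements. A time-reparametrization then produces a single continuous map $\eta : [0,1]\times E \to E$. Equivariance of each $\mathcal H_i$ together with $G$-invariance of the $\psi_i$ and of $\chi$ gives $\eta(t,gu)=g\,\eta(t,u)$, so $\eta$ is $G$-equivariant. The triangle inequality and $d(\mathcal H_i(v,t),v)\le t$ bound the total displacement by the total flow time, yielding $(a)$ with $\lambda$ fixed by the $\delta_i$ and the normalization; the stagewise monotonicity $f(\mathcal H_i(v,t))\le f(v)$ gives $(b)$; and $\chi\equiv 0$ off $A$ gives $(c)$.

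Finally I would extract the quantitative descent. Along the flow, wherever $\chi>0$ the value of $f$ strictly decreases at rate at least $\varepsilon$, which immediately yields $(e)$ for points of $f^c\cap C$, where $\chi=1$ initially and the trajectory starts inside $A$. For $(d)$ one must run the flow long enough to lower $f$ by $2\varepsilon'$ on $f^{c+\varepsilon'}\cap C$ \emph{while keeping the trajectory inside} $C_{2\varepsilon}$, so that the slope estimate continues to hold throughout the motion; balancing the descent rate $\varepsilon$ against the admissible displacement away from $C$ is what fixes the normalizing constant $\varepsilon' = \varepsilon/(2\sqrt{1+\varepsilon^2})$, which is the $f$-component of the optimal descent direction in the $(f,\,d(\cdot,C))$-plane. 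The main obstacle is precisely the gluing step of the third paragraph: in the absence of a linear or differentiable structure one cannot integrate a pseudo-gradient field, and the concatenation of local homotopies must be carried out so as to remain simultaneously continuous, $G$-equivariant, and compatible with the descent and displacement bounds — the delicate point being to keep the trajectory trapped in $C_{2\varepsilon}$ long enough to harvest the full decrease of $2\varepsilon'$.
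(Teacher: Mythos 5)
There is nothing in the paper to compare your argument against: Theorem \ref{theo 2.2} is not proved in this paper at all. It is imported verbatim as Theorem 4.4 of the Kristály--Varga reference \cite{KV} (building on the Corvellec--Degiovanni--Marzocchi deformation machinery), and the authors use it as a black box in the proof of Theorem \ref{theo 3.1}. So the only question is whether your sketch would stand on its own as a proof of the cited result.

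As it stands it does not, and you have in fact named the gap yourself. The overall strategy is the right one --- extract local descent homotopies from the definition of $|d_Gf|$, make the partition of unity and the cutoff $G$-invariant by Haar averaging, and localize the motion to the active region to get $(a)$--$(c)$ --- but the entire content of the theorem lives in the concatenation step that you defer: after running $\mathcal H_{i_1}$ for a positive time the trajectory may leave the domain of $\mathcal H_{i_2}$, the ordering and durations of the stages vary with the base point so continuity of the glued map is not automatic, and one must prove that the descent rate $\varepsilon$ survives the gluing \emph{uniformly} while the trajectory remains in $C_{2\varepsilon}$ and in $f^{-1}([c-2\varepsilon,c+2\varepsilon])$ (otherwise the slope hypothesis, hence the local homotopies, are no longer available along the flow). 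This is precisely where \cite{KV} and \cite{CDM} do all the work, typically via a carefully normalized iteration in a combined metric on $E\times\mathbb{R}$ controlling displacement and decrease of $f$ simultaneously. Relatedly, your heuristic for $\varepsilon'=\varepsilon/(2\sqrt{1+\varepsilon^2})$ is asserted by analogy rather than derived; note that a naive balancing (decrease $2\varepsilon'$ at rate $\varepsilon$ forces flow time, hence displacement, $2\varepsilon'/\varepsilon$, which must not exceed $2\varepsilon$) would require $\varepsilon'\le\varepsilon^2$, which the stated constant violates for small $\varepsilon$ --- so the actual normalization of the construction matters and cannot be waved at. In short: a correct plan, but the step you flag as ``the main obstacle'' is the theorem.
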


	\section{Main results}
	
	\begin{definition}
		Assume that a compact group $G$ acts diagonally on $V^k$, that is,
		\begin{equation*}
			g(v_1, \cdots, v_k):= (gv_1, \cdots, gv_k)
		\end{equation*} 
		where $V$ is a finite-dimensional space. Let $k\ge 2$. The action of $G$ is admissible if every continuous $G-$equivariant map $\partial U \to V^{k-1}$ has a zero. Here, $U$ may be any open bounded $G-$invariant neighborhood of $0$ in $V^k$.
	\end{definition}
		\begin{example}
			The Borsuk-Ulam antipodal theorem says that the antipodal action of $G:= \mathbb{Z}_2$ on $V:=\mathbb{R}$ is admissible.
		\end{example}

		We consider the following situation: \\
		
		$(A_1)$ The compact group $G$ acts isometrically on the Banach space $E= \overline{\underset{j\in \mathbb{N}}{\oplus} E_j}$, the spaces $E_j$ are $G-$invariant and there exists a finite-dimensional space $V$ such that, for every $j\in \mathbb{N}, E_j \simeq V$ and the action of $G$ on $V$ is admissible.\\
		
		In the rest of the paper, we use the following notation:
		\begin{equation*}
			Y_k := \oplus_{j=0}^{k}E_j, \;\; Z_k:= \overline{\oplus_{j=k}^{\infty} E_j},
		\end{equation*}
	\begin{equation*}
		B_k:= \Big\{u\in Y_k\;|\; \|u\|\le \rho_k\Big\}, \;\; N_k:=\Big\{u\in Z_k\;|\; \|u\| = r_k\Big\},
	\end{equation*}
	where $\rho_k> r_k>0$ are fixed.
	\begin{lemma}[Intersection lemma, see \cite{Wi}]
		\label{lem 3.1}
		Under assumption $(A_1)$, if $\gamma\in \mathcal{C}(B_k, E)$ is $G-$equivariant and if $\gamma|_{\partial B_k}=\operatorname{id}$, then \[\gamma(B_k)\cap N_k \ne \emptyset.\]
	\end{lemma}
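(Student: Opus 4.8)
The plan is to reduce the statement to a single application of the admissibility hypothesis. Write $E=Y_{k-1}\oplus Z_k$ and let $P\colon E\to Y_{k-1}$ be the continuous linear projection with kernel $Z_k$; since every $E_j$ is $G$-invariant, $P$ is $G$-equivariant. The first observation is that $\gamma(u)\in N_k$ is equivalent to the two conditions $P\gamma(u)=0$ (which places $\gamma(u)$ in $Z_k$) and $\|\gamma(u)\|=r_k$. Hence it suffices to produce a single point $u_0\in B_k$ satisfying both.

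Next I would record two consequences of admissibility together with equivariance. First, admissibility forces $V^G=\{0\}$: were there a nonzero $G$-fixed vector, it would furnish a nonvanishing constant $G$-equivariant map $\partial U\to V^{k-1}$, contradicting the definition. Since $E_j\simeq V$ for every $j$, it follows that $E^G=\{0\}$, and because $\gamma(0)=g\,\gamma(0)$ for all $g\in G$ (equivariance applied at the fixed point $0$), we obtain $\gamma(0)=0$. In particular $0$ lies in the open $G$-invariant set
\[
O:=\{u\in \operatorname{int} B_k : \|\gamma(u)\|<r_k\},
\]
which is therefore nonempty.

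Then I would isolate the connected component $O_0$ of $O$ containing $0$. As $0$ is a $G$-fixed point, $O_0$ is $G$-invariant; it is open, bounded, and a neighborhood of $0$ in $Y_k\simeq V^{k+1}$. The key geometric point is that $\overline{O_0}$ stays inside $\operatorname{int} B_k$: on $\partial B_k$ one has $\gamma=\operatorname{id}$, so $\|\gamma(u)\|=\rho_k>r_k$ there, and by continuity an entire neighborhood of $\partial B_k$ is disjoint from $O$. From this, every $u\in\partial O_0$ satisfies $\|\gamma(u)\|\le r_k$ (as a limit of points of $O_0$) but cannot satisfy $\|\gamma(u)\|<r_k$ (otherwise $u\in O$ would force $u\in O_0$, contradicting $u\in\partial O_0$), whence $\|\gamma(u)\|=r_k$ on $\partial O_0$.

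Finally I would invoke the admissibility of the $G$-action on $V$, in the form applicable to $V^{k+1}\simeq Y_k$: the continuous $G$-equivariant map $P\gamma|_{\partial O_0}\colon\partial O_0\to Y_{k-1}\simeq V^{k}$ must have a zero $u_0$. Then $P\gamma(u_0)=0$ and $\|\gamma(u_0)\|=r_k$, so $\gamma(u_0)\in N_k$, proving the lemma. The main obstacle is the third step: verifying that $O_0$ is a legitimate domain for the admissibility axiom — open, bounded, $G$-invariant, and a genuine neighborhood of $0$ in $Y_k$ — and, above all, that its boundary sits in the interior of $B_k$ and carries the constant value $\|\gamma\|=r_k$. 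Once this topological bookkeeping is secured, the conclusion follows at once from admissibility.
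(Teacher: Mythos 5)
Your argument is correct and is essentially the classical proof of the intersection lemma from Willem's \emph{Minimax Theorems} (the source the paper cites here without reproducing the proof): the $G$-invariant component $O_0$ of $\{u\in\operatorname{int}B_k:\|\gamma(u)\|<r_k\}$ containing $0$, the verification that $\|\gamma(\cdot)\|=r_k$ on $\partial O_0\subset\operatorname{int}B_k$, and admissibility applied to $P\gamma|_{\partial O_0}\colon\partial O_0\to Y_{k-1}\simeq V^{k}$ with $O_0\subset Y_k\simeq V^{k+1}$ are exactly the standard steps. Your preliminary observation that admissibility forces $V^G=\{0\}$, hence $\gamma(0)=0$ by equivariance, is also the standard (and necessary) justification that $0\in O_0$.
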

    \subsection{A general minimax result}
    The next theorem is a generalization of \cite[Theorem 3.5]{Wi}.
	\begin{theorem}[General minimax principle]
		\label{theo 3.1}
Under assumption $(A_1)$, let $f : E\to \mathbb{R}$ be a continuous and $G-$invariant functional. Define for every $k\ge 2$, 
\begin{equation}
\label{eq 1}
	c_k:= \underset{\gamma\in \Gamma_k}{\inf}\underset{u\in B_k}{\max}\; f(\gamma(u)),
\end{equation}
where \begin{equation*}
	\Gamma_k:= \Big\{\gamma\in \mathcal{C}(B_k, E)\;\Big|\; \gamma \;\; \text{is $G-$equivariant and}\;\; \gamma \Big|_{\partial B_k}=\operatorname{id}\Big \}.
\end{equation*}
If \begin{equation*}
	b_k:= \underset{\underset{\|u\|=r_k}{u\in Z_k}}{\inf} \; f(u)> a_k:= \underset{\underset{\|u\|=\rho_k}{u\in Y_k}}{\max}\; f(u),
\end{equation*}
then $c_k \ge b_k$ and, for every $\varepsilon \in (0,\frac{c_k-a_k}{2})$ and for every  $\gamma \in \Gamma_k$ such that 
\begin{equation}
	\label{eq 2}
	\underset{u\in B_k}{\max}\; f(\gamma(u))\le c_k + \varepsilon', \;\; \text{for some}\;\; \varepsilon' \in (0,\varepsilon),
\end{equation}
 there exists $u\in E$ such that 
 \begin{enumerate}
 	\item[(a)] \[c_k-2\varepsilon \le f(u)\le c_k+2\varepsilon,\]
 	\item[(b)] \[d(u, \gamma(B_k))\le 2\varepsilon,\]
 	\item[(c)] \[|d_Gf|(u)\le \varepsilon.\] 
 	
 \end{enumerate}
	\end{theorem}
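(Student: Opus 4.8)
The plan is to treat the two assertions separately, the comparison $c_k\ge b_k$ being a direct consequence of the Intersection Lemma and the quantitative conclusion (a)--(c) being obtained by contradiction through the equivariant deformation Lemma. For the first part, fix any $\gamma\in\Gamma_k$; since $\gamma$ is $G$-equivariant with $\gamma|_{\partial B_k}=\operatorname{id}$, Lemma~\ref{lem 3.1} furnishes a point $u_0\in B_k$ with $\gamma(u_0)\in N_k\subset Z_k$ and $\|\gamma(u_0)\|=r_k$, whence $\max_{u\in B_k}f(\gamma(u))\ge f(\gamma(u_0))\ge b_k$. Taking the infimum over $\gamma\in\Gamma_k$ yields $c_k\ge b_k$, and in particular $c_k\ge b_k>a_k$, so the range for $\varepsilon$ is nonempty and $a_k<c_k-2\varepsilon$ whenever $\varepsilon<\frac{c_k-a_k}{2}$.

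For the quantitative part, I would fix $\varepsilon\in(0,\frac{c_k-a_k}{2})$ and $\gamma\in\Gamma_k$ as in \eqref{eq 2}, and suppose that no $u\in E$ satisfies (a)--(c) simultaneously. Then every $u$ with $c_k-2\varepsilon\le f(u)\le c_k+2\varepsilon$ and $d(u,\gamma(B_k))\le 2\varepsilon$ must obey $|d_Gf|(u)>\varepsilon$, which is exactly the hypothesis of Theorem~\ref{theo 2.2} with $c=c_k$ and $C:=\gamma(B_k)$. To apply that theorem legitimately I would first check that $C$ is an admissible set: $B_k$ is a closed bounded subset of the finite-dimensional space $Y_k$, hence compact, so $C$ is compact and therefore closed; and since the $E_j$ are $G$-invariant, $\|\cdot\|$ is $G$-invariant and $\gamma$ is $G$-equivariant, we get $g(C)=\gamma(g(B_k))=\gamma(B_k)=C$. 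The lemma then delivers $\lambda>0$, the threshold $\varepsilon'':=\frac{\varepsilon}{2\sqrt{1+\varepsilon^2}}\in(0,\varepsilon)$, and a continuous $G$-equivariant map $\eta:[0,1]\times E\to E$ satisfying properties (a)--(e).

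Next I would set $\beta:=\eta(1,\gamma(\cdot))$ and verify $\beta\in\Gamma_k$. Continuity is clear; $G$-equivariance follows from $\beta(gu)=\eta(1,\gamma(gu))=\eta(1,g\gamma(u))=g\,\eta(1,\gamma(u))=g\beta(u)$; and $\beta|_{\partial B_k}=\operatorname{id}$ because for $u\in\partial B_k$ we have $\gamma(u)=u$ with $f(u)\le a_k<c_k-2\varepsilon$, so $\gamma(u)\notin f^{-1}([c_k-2\varepsilon,c_k+2\varepsilon])$ and property~(c) of $\eta$ forces $\beta(u)=\eta(1,u)=u$. Finally, every $\gamma(u)$ lying in $f^{c_k+\varepsilon''}\cap C$ is sent by property~(d) into $f^{c_k-\varepsilon''}$, so that $f(\beta(u))\le c_k-\varepsilon''<c_k$; this drives $\max_{u\in B_k}f(\beta(u))$ strictly below $c_k$ and contradicts $\beta\in\Gamma_k$ together with $c_k=\inf_{\Gamma_k}\max f$, closing the argument.

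The step I expect to be most delicate is the reconciliation of the two small parameters. Property~(d) lowers only points at level at most $c_k+\varepsilon''$, with $\varepsilon''=\frac{\varepsilon}{2\sqrt{1+\varepsilon^2}}$ fixed by the deformation lemma, whereas \eqref{eq 2} controls $\gamma$ only up to the gap $\varepsilon'<\varepsilon$; ensuring that the \emph{entire} image $\gamma(B_k)$, and not just its part below $c_k+\varepsilon''$, is pushed strictly under $c_k$ requires working in the regime $\varepsilon'\le\varepsilon''$, which is the pertinent one since the infimum defining $c_k$ lets one choose $\gamma$ with gap as small as $\varepsilon''$. The remaining care, namely that $\gamma(B_k)$ is a closed $G$-invariant set and that $\beta$ stays in $\Gamma_k$, rests entirely on the $G$-equivariance of $\eta$ and on the boundary normalization enforced by $\varepsilon<\frac{c_k-a_k}{2}$.
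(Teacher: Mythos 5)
Your argument is essentially the paper's own proof: establish $c_k\ge b_k$ via Lemma~\ref{lem 3.1}, argue by contradiction, apply Theorem~\ref{theo 2.2} with $C:=\gamma(B_k)$, and show that $\beta:=\eta(1,\gamma(\cdot))$ lies in $\Gamma_k$ (using $a_k<c_k-2\varepsilon$ and property (c) of $\eta$ on $\partial B_k$) yet has $\max_{B_k}f\circ\beta<c_k$, contradicting the definition of $c_k$. You are in fact somewhat more careful than the paper on two points: you verify that $\gamma(B_k)$ is closed and $G$-invariant before invoking the deformation, and you explicitly flag the mismatch between the gap $\varepsilon'$ of \eqref{eq 2} and the threshold $\varepsilon''=\frac{\varepsilon}{2\sqrt{1+\varepsilon^2}}$ appearing in conclusion $(d)$ of Theorem~\ref{theo 2.2}. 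That mismatch is a genuine issue which the paper silently suppresses by denoting both quantities $\varepsilon'$: when $\varepsilon'\in(\varepsilon'',\varepsilon)$, property $(d)$ only lowers the part of $\gamma(B_k)$ below level $c_k+\varepsilon''$, and properties $(b)$ and $(e)$ do not push the remaining points strictly below $c_k$, so the contradiction does not close. Your restriction to the regime $\varepsilon'\le\varepsilon''$ therefore proves a slightly weaker statement than the one claimed (the conclusion holds for every $\gamma$ that is $\varepsilon''$-optimal rather than $\varepsilon$-optimal), but this is exactly what the paper's own argument establishes, and it is all that is needed to extract the Palais--Smale sequence in the proof of Theorem~\ref{theo 3.2}.
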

	\begin{proof}
		By the intersection lemma (Lemma $\ref{lem 3.1}$), $c_k\ge b_k$. Suppose that the conclusion of the theorem is false. Then, there are $\varepsilon \in (0, \frac{c_k-a_k}{2})$ and $\gamma \in \Gamma_k$ satisfying
\begin{equation}
\label{eq 3}
   \underset{u \in B_k}{\max}\; f(\gamma(u)) \le c_k +\varepsilon' 
\end{equation}
	such that for every $u \in f^{-1} \Big([c_k-2\varepsilon,  c_k+2\varepsilon]\Big) \cap \Big(\gamma(B_k)\Big)_{2\varepsilon}$, we have \[|d_G f|(u) > \varepsilon.\]
	Then, Theorem $\ref{theo 2.2}$ can be applied with $C := \gamma(B_k)$. Since $\varepsilon \in (0, \frac{c_k-a_k}{2})$, then
		\begin{equation} 
			\label{eq 4}
			c_k-2\varepsilon > a_k.
		\end{equation}
		We consider the map
        \begin{eqnarray*}
        \beta &:& B_k\to E\\
            u &\mapsto& \eta(1, \gamma(u)),
        \end{eqnarray*}
        where $\eta$ is given by Theorem $\ref{theo 2.2}$. \\ For every $u\in \partial B_k$, since $\gamma(u)=u$, we obtain from $(\ref{eq 4})$ that \[f(\gamma(u)) \le a_k<c_k-2\varepsilon.\] Therefore, \[\gamma(u)\notin f^{-1}\Big([c_k-2\varepsilon, c_k+2\varepsilon]\Big).\] By $(c)$ of Theorem $\ref{theo 2.2}$ we have 
		\begin{equation*}
			\beta(u)=\eta(1,\gamma(u))=\gamma(u)=u.
		\end{equation*}
		Since $\eta$ is $G-$equivariant, it follows that $\beta$ is equivariant, so $\beta \in \Gamma_k$. 
        Since for every $u\in B_k$, $(\ref{eq 3})$ implies $\gamma (u) \in f^{c_k+\varepsilon'}$, that is, $\gamma(u)\in C\cap f^{c_k+\varepsilon'}$, then by $(d)$ of Theorem $\ref{theo 2.2}$, \[\eta(1, \gamma(u))\in f^{c_k-\varepsilon'}.\] This implies that
		\begin{equation*}
			\underset{u\in B_k}{\max}\; f(\beta(u)) = \underset{u\in B_k}{\max}\; f\Big(\eta(1,\gamma(u))\Big) \le c_k - \varepsilon'.
		\end{equation*}
	Finally, by the definition of $c_k$, we have
    \begin{equation*}
       c_k \le \underset{u\in B_k}{\max}\; f(\beta(u)) = \underset{u\in B_k}{\max}\; f\Big(\eta(1,\gamma(u))\Big) \le c_k - \varepsilon'.
    \end{equation*}
    This is a contradiction.
	\end{proof}
    \subsection{Fountain theorem for continuous functionals}
    The following theorem is a generalization of the Fountain theorem.
	\begin{theorem}[Fountain theorem for continuous functionals]
		\label{theo 3.2}
		Under assumption $(A_1)$, let $f: E\to \mathbb{R}$ be a continuous and $G-$invariant functional. If, for every $k\in \mathbb{N}$, there exists $\rho_k >r_k>0$ such that 
		\begin{enumerate}
			\item[$(A_2)$] $a_k:= \underset{u\in \partial B_k}{\max}\; f(u)\le 0$,
			\item[$(A_3)$] $b_k:= \underset{u\in N_k}{\inf} \; f(u) \to \infty,\; k\to \infty$,
			\item[$(A_4)$] the functional $f$ satisfies $G-(PS)_c$ condition for every $c>0$,
		\end{enumerate}
        then $f$ has an unbounded sequence of $G-$critical values.
	\end{theorem}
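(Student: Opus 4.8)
The plan is to run the general minimax principle (Theorem~\ref{theo 3.1}) at each level $c_k$ for all sufficiently large indices $k$, to extract from it a $G$-Palais--Smale sequence at that level, and then to convert this sequence into a genuine $G$-critical point by means of hypothesis $(A_4)$.

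First I would dispose of the small indices. Since $b_k\to\infty$ by $(A_3)$ while $a_k\le 0$ by $(A_2)$, there is an index $k_0\ge 2$ such that $b_k>0\ge a_k$ for every $k\ge k_0$. For each such $k$ the strict inequality $b_k>a_k$ is exactly the hypothesis of Theorem~\ref{theo 3.1}, whence $c_k\ge b_k>0$. In particular $c_k\to\infty$, so it will suffice to show that each $c_k$ with $k\ge k_0$ is a $G$-critical value: the resulting family $\{c_k\}_{k\ge k_0}$ is then an unbounded set of $G$-critical values.

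Now I would fix $k\ge k_0$ and build a $G$-Palais--Smale sequence at level $c_k$. Since $c_k\ge b_k>0\ge a_k$, the interval $(0,\tfrac{c_k-a_k}{2})$ is nonempty; choose a sequence $\varepsilon_n\downarrow 0$ lying in it. By the very definition of $c_k$ as an infimum over $\Gamma_k$, for each $n$ I can select $\gamma_n\in\Gamma_k$ with $\max_{u\in B_k}f(\gamma_n(u))\le c_k+\varepsilon_n'$ for some $\varepsilon_n'\in(0,\varepsilon_n)$, which is precisely condition~(\ref{eq 2}). Theorem~\ref{theo 3.1} then furnishes, for each $n$, a point $u_n\in E$ satisfying $(a)$--$(c)$ of that statement with $\varepsilon=\varepsilon_n$; from $(a)$ we read off $f(u_n)\to c_k$ and from $(c)$ we read off $|d_Gf|(u_n)\to 0$.

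Finally, because $c_k>0$, hypothesis $(A_4)$ guarantees that $f$ satisfies the $G$-$(PS)_{c_k}$ condition, so $(u_n)$ admits a subsequence converging to some $u^{(k)}\in E$. By Definition~\ref{def 2.6} the limit $u^{(k)}$ is a $G$-critical point, and continuity of $f$ together with $f(u_n)\to c_k$ gives $f(u^{(k)})=c_k$. Hence $c_k$ is a $G$-critical value for every $k\ge k_0$, and since $c_k\ge b_k\to\infty$ these values are unbounded, which is the desired conclusion. The only delicate point in this argument is the bookkeeping around positivity: the $(PS)_c$ condition is assumed in $(A_4)$ only for $c>0$, so one must first secure $c_k>0$ (via the combination of $(A_2)$ and $(A_3)$) before invoking it; the extraction of the near-optimal maps $\gamma_n$ and of the Palais--Smale sequence is otherwise routine.
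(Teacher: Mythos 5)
Your proposal is correct and follows essentially the same route as the paper's own proof: invoke Theorem~\ref{theo 3.1} at each level $c_k$ with $k$ large enough that $b_k>0$, extract a $G$-Palais--Smale sequence at level $c_k$ by letting $\varepsilon\downarrow 0$, and then use $(A_4)$ together with $c_k\ge b_k\to\infty$ to conclude. Your version merely spells out the choice of $\varepsilon_n$ and the near-optimal maps $\gamma_n$, which the paper leaves implicit.
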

	\begin{proof}
		For $k$ large enough, $b_k>0$. Theorem $\ref{theo 3.1}$ implies then the existence of a sequence $(u_n)\subset E$ satisfying 
		\begin{equation*}
			f(u_n)\to c_k,\;\; |d_G f|(u_n)\to 0,
		\end{equation*}
        where $c_k$ is defined in $(\ref{eq 1})$.
		It follows from $(A_4)$ that $c_k$ is a $G-$critical value for $f$. Since $c_k\ge b_k$, by $(A_3)$, $c_k \to \infty$. The proof of the theorem is complete.
	\end{proof}
	\section{Application of the Fountain theorem for continuous functionals}
	In this section, the continuous version of the Fountain theorem is applied to the problem
	
	\begin{equation*}
		(P)
		\begin{cases}
		-\Delta u + u = f(x,u),\\
		u\in H^1(\mathbb{R}^2),
			\end{cases}
	\end{equation*} 
	where the nonlineear function $f \in \mathcal{C}(\mathbb{R}^2 \times \mathbb{R}, \mathbb{R})$ satisfies some growth conditions, $\Delta$ is the Laplace operator and the space $H^1(\mathbb{R}^2)$ is the Hilbert space defined by
	\begin{equation*}
		H^1(\mathbb{R}^2) := \Big\{u\in L^2(\mathbb{R}^2)\;\Big|\; |\nabla u| \in L^2(\mathbb{R}^2) \Big\},
	\end{equation*}
	endowed with the inner product 
	\begin{equation*}
		(u,v) = \int_{\mathbb{R}^2} \Big(\nabla u \nabla v + uv\Big) dx,
	\end{equation*}
	and the associated norm $\|.\|$ given by
	\begin{equation*}
	\|u\|^2 :=	\int_{\mathbb{R}^2} \Big(|\nabla u |^2 + |u|^2\Big) dx.
	\end{equation*}
We define the functional 
	\begin{equation}
    \label{eq 5}
		J(u) = \dfrac{1}{2} \int_{\mathbb{R}^2} \Big(|\nabla u(x)|^2 + u(x)^2 \Big)dx - \int_{\mathbb{R}^2} F(x,u(x))dx
	\end{equation}
	with 
	\begin{equation*}
		F(x,u(x)) := \int_{0}^{u(x)}f(x,t)dt.
	\end{equation*}
In this section, $|\cdot|_p$ stands for the $L^p$-norm; $\to$ and $\rightharpoonup$ denote strong and weak convergence, respectively.
    
	\begin{definition}
    \label{def 4.1}
			We say that $u$ is a weak solution of problem $(P)$ if $u \in H^1(\mathbb{R}^2)$, and satisfies for any $ w \in H^1(\mathbb{R}^2)$ : 
		\begin{equation*}
			\int_{\mathbb{R}^2} \Big(\nabla u \nabla w +  u w \Big)dx - \int_{\mathbb{R}^2} f(x,u)w dx =0.\\
		\end{equation*}
        \end{definition}

In order to overcome the loss of compactness of the Sobolev embedding involving $H^1(\mathbb{R}^2)$, we are going to work with the space $H^1_{rad}(\mathbb{R}^2)$ defined by 
\begin{equation*}
    H^1_{rad}(\mathbb{R}^2):= \Big\{u\in H^1(\mathbb{R}^2) \; |\; u(x) =u(|x|),\;\; \text{alomost everywhere, \;in }\;\; \mathbb{R}^2 \Big\}.
\end{equation*}

It is well known (see \cite{Wi}) that $H^1_{rad}(\mathbb{R}^2)$ is a closed vector subspace of $H^1(\mathbb{R}^2)$, and so, $H^1_{rad}(\mathbb{R}^2)$ is a Hilbert space with the norm of $H^1(\mathbb{R}^2)$. The space $H^1_{rad}(\mathbb{R}^2)$ is called the subspace of radial functions of $H^1(\mathbb{R}^2)$. Moreover, there holds the following compact embedding due to Strauss \cite{S}.
\begin{theorem}[Strauss, 1977]
\label{theo 4.1}
Let $2<p<\infty$. The embeddings
\begin{equation*}
    H^1_{rad}(\mathbb{R}^2) \subset L^p(\mathbb{R}^2)
\end{equation*}
are compact.
\end{theorem}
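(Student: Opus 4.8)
The plan is to prove compactness via the sequential criterion: I would show that every bounded sequence $(u_n)$ in $H^1_{rad}(\mathbb{R}^2)$ admits a subsequence converging strongly in $L^p(\mathbb{R}^2)$ for each fixed $p\in(2,\infty)$. Since $H^1_{rad}(\mathbb{R}^2)$ is a closed subspace of the Hilbert space $H^1(\mathbb{R}^2)$, it is reflexive, so after passing to a subsequence (still denoted $(u_n)$) I may assume $u_n\rightharpoonup u$ in $H^1_{rad}(\mathbb{R}^2)$, with $M:=\sup_n\|u_n\|<\infty$ and, by weak lower semicontinuity, $\|u\|\le M$. The continuity of the embedding is the standard two-dimensional Sobolev inequality $H^1(\mathbb{R}^2)\hookrightarrow L^p(\mathbb{R}^2)$ for $2\le p<\infty$; the substance of the statement is compactness, for which the two ingredients are a pointwise radial decay estimate (the Strauss lemma) and the local Rellich--Kondrachov theorem.

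First I would establish the radial lemma: there is $C>0$ such that every $u\in H^1_{rad}(\mathbb{R}^2)$ satisfies $|u(x)|\le C|x|^{-1/2}\|u\|$ for a.e.\ $x\neq 0$. By density it suffices to treat smooth compactly supported radial $u$, written as $u=u(r)$ with $r=|x|$. Starting from $u(r)^2=-\int_r^\infty 2u(s)u'(s)\,ds$, factoring $|u(s)||u'(s)|=\tfrac1s\,\big(s^{1/2}|u(s)|\big)\big(s^{1/2}|u'(s)|\big)$, using $\tfrac1s\le\tfrac1r$ for $s\ge r$, and applying Cauchy--Schwarz with respect to $s\,ds$, I obtain
\[
|u(r)|^2\le \frac{2}{r}\Big(\int_0^\infty s\,u(s)^2\,ds\Big)^{1/2}\Big(\int_0^\infty s\,u'(s)^2\,ds\Big)^{1/2}=\frac{1}{\pi r}\,|u|_2\,|\nabla u|_2\le\frac{1}{2\pi r}\,\|u\|^2,
\]
where I used $|u|_2^2=2\pi\int_0^\infty s\,u(s)^2\,ds$ and $|\nabla u|_2^2=2\pi\int_0^\infty s\,u'(s)^2\,ds$. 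This gives the claim, so in particular $|u_n(x)|\le CM|x|^{-1/2}$ and $|u(x)|\le CM|x|^{-1/2}$.

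Next I would use this decay to control the mass at infinity uniformly. For $|x|>R$ and $p>2$, writing $|u_n|^p=|u_n|^{p-2}|u_n|^2$ and inserting the pointwise bound yields
\[
\int_{|x|>R}|u_n|^p\,dx\le (CM)^{p-2}R^{-(p-2)/2}\int_{|x|>R}|u_n|^2\,dx\le (CM)^{p-2}M^2\,R^{-(p-2)/2},
\]
and the same bound holds with $u$ in place of $u_n$; since $p>2$ makes the exponent negative, these tails tend to $0$ as $R\to\infty$, uniformly in $n$. On the ball $B_R=\{|x|\le R\}$ the embedding $H^1(B_R)\hookrightarrow L^p(B_R)$ is compact by Rellich--Kondrachov, and since the restriction $u_n|_{B_R}\rightharpoonup u|_{B_R}$ in $H^1(B_R)$, a compact operator carries this weakly convergent sequence to a norm-convergent one, so $u_n\to u$ in $L^p(B_R)$ with no further subsequence. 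Then, for any $\varepsilon>0$, I fix $R$ so large that the two tails are each below $\varepsilon$, split $\int_{\mathbb{R}^2}|u_n-u|^p\le \int_{B_R}|u_n-u|^p+2^{p-1}\big(\int_{|x|>R}|u_n|^p+\int_{|x|>R}|u|^p\big)$, let $n\to\infty$ so the first term vanishes, and conclude $\limsup_n|u_n-u|_p^p\le 2^p\varepsilon$; letting $\varepsilon\to0$ gives $u_n\to u$ in $L^p(\mathbb{R}^2)$.

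The main obstacle is the loss of compactness caused by the unboundedness of $\mathbb{R}^2$: a bounded sequence in $H^1(\mathbb{R}^2)$ may fail to have any $L^p$-convergent subsequence because mass escapes to infinity, and this is exactly why the unrestricted embedding is only continuous, not compact. The role of radial symmetry is to force that escaping mass to vanish uniformly through the Strauss decay estimate, which is the step that genuinely fails in the non-radial setting; once that is in hand, combining it with local Rellich compactness closes the argument. The points requiring care are the uniformity in $n$ of the tail estimate and the identification of the weak $H^1$ limit with the strong $L^p$ limit on each ball, both of which are handled above.
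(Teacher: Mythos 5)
The paper does not prove this statement: it is quoted as a known result of Strauss (1977) with a citation to \cite{S}, so there is no in-paper argument to compare against. Your proof is the standard and correct one: the radial (Strauss) lemma $|u(x)|\le C|x|^{-1/2}\|u\|$, obtained from $u(r)^2=-\int_r^\infty 2uu'\,ds$ and Cauchy--Schwarz in the measure $s\,ds$, gives a tail bound $\int_{|x|>R}|u_n|^p\,dx\lesssim R^{-(p-2)/2}$ uniform in $n$ (this is exactly where $p>2$ and radial symmetry are used), and combining it with Rellich--Kondrachov on $B_R$ and the weak convergence of a subsequence yields strong $L^p$ convergence. The computation of the constants, the identification of the local strong limit with the weak $H^1$ limit, and the density reduction to smooth compactly supported radial functions are all handled correctly; the only implicit point worth making explicit is that radial mollification and radial cutoffs do preserve $H^1_{rad}(\mathbb{R}^2)$, which justifies that density step.
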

\begin{definition}
    Let $O(2)$ be the group of all linear orthogonal transformations of $\mathbb{R}^2$. Let $G_0\subset O(2)$ be acting on $H^1(\mathbb{R}^2)$ by $g_0u(x):=u(g_0^{-1}x)$, for all $g_0 \in G_0$. We define the space of $G_0-$invariant points of $H^1(\mathbb{R}^2)$ by 
    \begin{equation*}
        Fix(G_0):=\Big\{u\in H^1(\mathbb{R}^2)\;|\; g_0u=u, \;\; \text{for every}\;\; g_0\in G_0 \Big\},
    \end{equation*}
    where $O(2)$ is endowed with the composition operation.
\end{definition}

\begin{remark}
    If $G_0=O(2)$, then 
    \begin{equation*}
    Fix(G_0) =H^1_{rad}(\mathbb{R}^2). 
    \end{equation*}
\end{remark}

$ (\star)$ From now on, we set $E:= H^1_{rad}(\mathbb{R}^2)$. We choose an orthonormal basis $(e_j)$ of $E$ and define $E_j:= \mathbb{R}e_j$. In $H^1(\mathbb{R}^2)$, we consider the natural antipodal action of the group \[G = \mathbb{Z}_2 \simeq \{ \mathrm{id}, -\mathrm{id} \},\] defined by \[( -\mathrm{id}) \cdot u = -u.\]\\

Before we state the main result of this section, we recall the following principle of symmetric criticality theorem from Palais (1979).
\begin{theorem}[Theorem 1.28, \cite{Wi}]
\label{theo 4.2}
    Assume that the action of the topological group $G_0$ on the Hilbert space $X$ is isometric. If $\varphi\in \mathcal{C}^1(X,\mathbb{R})$ is $G_0-$invariant and $u$ is a $G-$critical point of $\varphi$ restricted to $Fix(G_0)$, then $u$ is a $G-$critical point of $\varphi$ on $X$.
\end{theorem}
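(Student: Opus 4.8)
The plan is to reduce the assertion to the classical principle of symmetric criticality, which becomes transparent in the Hilbert space setting because $\mathrm{Fix}(G_0)$ is a closed linear subspace. Write $\Sigma := \mathrm{Fix}(G_0)$; since each $g_0 \in G_0$ acts as a continuous linear isometry, $\Sigma = \bigcap_{g_0 \in G_0} \ker(g_0 - \mathrm{Id})$ is closed and linear, and its orthogonal complement $\Sigma^{\perp}$ is $G_0$-invariant. Because $\varphi$ is of class $\mathcal{C}^1$ and $G = \mathbb{Z}_2$, the notion of $G$-critical point coincides with the ordinary one: a standard fact from \cite{DM} gives $|d\varphi|(u) = \|\varphi'(u)\|$ for $\mathcal{C}^1$ functionals, and Remark \ref{rmk 2} yields $|d_G\varphi|(u) = |d\varphi|(u)$. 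The same applies to the restriction $\psi := \varphi|_{\Sigma} \in \mathcal{C}^1(\Sigma, \mathbb{R})$. Thus it suffices to prove the purely differential statement: if $\psi'(u) = 0$ for some $u \in \Sigma$, then $\varphi'(u) = 0$.

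The key step is to show that the gradient of $\varphi$ is $G_0$-equivariant. Let $\nabla\varphi(u) \in X$ be the Riesz representative of $\varphi'(u)$, defined by $\langle \nabla\varphi(u), v \rangle = \varphi'(u)v$ for all $v \in X$. Differentiating the invariance identity $\varphi(g_0 u) = \varphi(u)$ and using that $g_0$ is linear gives $\varphi'(g_0 u)(g_0 v) = \varphi'(u)v$; rewriting this via the Riesz map together with the isometry relation $g_0^{*} = g_0^{-1}$ yields $\langle g_0^{-1}\nabla\varphi(g_0 u), v\rangle = \langle \nabla\varphi(u), v\rangle$ for every $v \in X$, hence $\nabla\varphi(g_0 u) = g_0\,\nabla\varphi(u)$. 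Evaluating at a fixed point $u \in \Sigma$, where $g_0 u = u$, gives $g_0\,\nabla\varphi(u) = \nabla\varphi(u)$ for every $g_0 \in G_0$, so $\nabla\varphi(u) \in \Sigma$.

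To conclude, recall that the gradient of the restriction is $\nabla\psi(u) = P_{\Sigma}\,\nabla\varphi(u)$, where $P_{\Sigma}$ denotes the orthogonal projection onto $\Sigma$. If $u \in \Sigma$ is a critical point of $\psi$, then $P_{\Sigma}\,\nabla\varphi(u) = 0$; but the previous step shows $\nabla\varphi(u) \in \Sigma$, so $P_{\Sigma}\,\nabla\varphi(u) = \nabla\varphi(u)$, whence $\nabla\varphi(u) = 0$ and therefore $\varphi'(u) = 0$. By the reduction in the first paragraph, $u$ is a $G$-critical point of $\varphi$ on $X$. The main obstacle is the equivariance step: it relies essentially on the action being by linear isometries (so that $g_0^{*} = g_0^{-1}$ and $\Sigma$ is a subspace), and one must take care that the weak-slope notion of $G$-criticality used in the statement genuinely collapses to the vanishing of $\varphi'$ for $\mathcal{C}^1$ functionals, both on $X$ and on $\Sigma$. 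For a general isometric, possibly nonlinear, action one would instead invoke that $\mathrm{Fix}(G_0)$ is a totally geodesic submanifold to which $\nabla\varphi$ is tangent, which is Palais's original geometric argument.
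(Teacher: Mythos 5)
The paper does not prove this statement: it is quoted verbatim as Theorem 1.28 of Willem \cite{Wi} (Palais's principle of symmetric criticality) and used as a black box, so there is no in-paper proof to compare against. Your argument is the standard proof of that theorem in the Hilbert-space setting — equivariance of the gradient $\nabla\varphi(g_0u)=g_0\nabla\varphi(u)$ via $g_0^{*}=g_0^{-1}$, hence $\nabla\varphi(u)\in\mathrm{Fix}(G_0)$ at a fixed point, combined with $\nabla(\varphi|_{\mathrm{Fix}(G_0)})(u)=P_{\mathrm{Fix}(G_0)}\nabla\varphi(u)$ — and it is correct; it is essentially the proof given in the cited source. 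Your two points of care are also well placed: linearity of the action is implicit in Willem's definition of a group action on a normed space, and the reduction of the weak-slope notion of $G$-criticality to $\varphi'(u)=0$ for $\mathcal{C}^1$ functionals (via $|d\varphi|(u)=\|\varphi'(u)\|$ and Remark \ref{rmk 2}) is exactly what is needed to connect the classical statement to the nonsmooth framework used elsewhere in the paper.
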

\subsection{An existence result}
	Here is the main result of this section :
    
	\begin{theorem}
		\label{theo 4.3}
For the function $f$ from $(P)$, asumme that $f  \in \mathcal{C}(\mathbb{R}^2 \times \mathbb{R}, \mathbb{R})$.\\	Assume also that 
	\begin{enumerate} 
	\item[$(f_0)$]  $f$ has subcritical growth at $ t\to +\infty$, that is, if for all $\beta>0$ we have 
    \begin{equation*}
        \lim_{t\to +\infty} \dfrac{f(x,t)}{e^{\beta t^2}} =0,
    \end{equation*}
\item[$(f_1)$] 

    $\lim_{t\to 0} \dfrac{f(x,t)}{t} =0, \;\; \text{uniformly with respect to}\;\; x\in \mathbb{R}^2$,
	\item[$(f_2)$] there are $\mu>2$ and $R>0$ such that, for every $x\in \mathbb{R}^2$ and for every $t\in \mathbb{R}$ with $|t|\ge R$ we have
	\begin{equation*}
		0<\mu F(x,t)\le tf(x,t),
	\end{equation*}
    where \begin{equation*}
        F(x,t)=\int_0^{t}f(x,s)ds,
    \end{equation*}
    \item[$(f_3)$] $f(g_0x, t)= f(x,t)$, for every $x\in \mathbb{R}^2$, for every $g_0\in O(2)$ and for every $t\in \mathbb{R}$,
	\item[$(f_4)$] $f(x,-t)=-f(x,t)$, for every $x\in \mathbb{R}^2$, for every $t\in \mathbb{R}$.\\
	\end{enumerate}	
	
	Then problem $(P)$ has a sequence of weak solutions $(u_k)$ in $H^1(\mathbb{R}^2)$ such that $J(u_k)\to \infty$, as $k\to \infty$, where $J$ is given in $(\ref{eq 5})$.\\
	\end{theorem}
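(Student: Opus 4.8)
The plan is to apply the continuous Fountain theorem (Theorem~\ref{theo 3.2}) to the energy functional $J$ restricted to the radial space $E = H^1_{rad}(\mathbb{R}^2)$, and then to transfer the resulting critical points to the whole space $H^1(\mathbb{R}^2)$ by means of the principle of symmetric criticality (Theorem~\ref{theo 4.2}). First I would fix the abstract setting of $(\star)$: with $V := \mathbb{R}$, $E_j := \mathbb{R}e_j$, and the antipodal action of $G = \mathbb{Z}_2$, assumption $(A_1)$ holds because $G$ acts isometrically, each $E_j$ is $G$-invariant, and the antipodal action on $\mathbb{R}$ is admissible by the Borsuk--Ulam theorem (the Example following Definition~3.1). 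Condition $(f_4)$ gives $F(x,-t) = F(x,t)$, whence $J(-u) = J(u)$, so $J$ is $G$-invariant; conditions $(f_0)$ and $(f_1)$ together with the Trudinger--Moser inequality \cite{OS} ensure that $J$ is well defined and continuous on $E$.

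Next I would verify the geometric hypotheses $(A_2)$ and $(A_3)$. For $(A_2)$, the Ambrosetti--Rabinowitz type condition $(f_2)$ yields, after integrating $\mu F(x,t) \le t f(x,t)$, a lower bound $F(x,t) \ge c_1 |t|^{\mu} - c_2$ for $|t|$ large. Since $Y_k$ is finite-dimensional, all norms on it are equivalent, and this superquadratic growth forces $J(u) \to -\infty$ as $\|u\| \to \infty$ with $u \in Y_k$; hence one chooses $\rho_k$ so large that $J \le 0$ on $\partial B_k$. For $(A_3)$ I would use the standard Fountain estimate: setting $\beta_k := \sup\{ |u|_p : u \in Z_k,\ \|u\| = 1 \}$ for a suitable subcritical exponent $p$, the compact Strauss embedding (Theorem~\ref{theo 4.1}) forces $\beta_k \to 0$; combining this with the growth control on $F$ from $(f_0)$--$(f_1)$ and optimizing in $r_k$ gives $b_k = \inf_{N_k} J \to \infty$.

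The main obstacle is the compactness condition $(A_4)$, namely that $J$ satisfies $G$-$(PS)_c$ for every $c > 0$, since the loss of compactness on the unbounded domain $\mathbb{R}^2$ and the exponential scale of the nonlinearity both enter here. Given $(u_h) \subset E$ with $J(u_h) \to c$ and $|d_G J|(u_h) \to 0$, I would first deduce boundedness of $(u_h)$: using Remark~\ref{rmk 2} and Theorem~\ref{theo 2.1} to estimate $\langle J'(u_h), u_h\rangle$ by $|d_G J|(u_h)\,\|u_h\|$, and combining with $(f_2)$ in the usual way $\mu J(u_h) - \langle J'(u_h), u_h \rangle \ge (\tfrac{\mu}{2}-1)\|u_h\|^2 - C$, one bounds $\|u_h\|$. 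Passing to a subsequence, $u_h \rightharpoonup u$ in $E$ and, by the compact embedding of Theorem~\ref{theo 4.1}, $u_h \to u$ strongly in $L^p$ for $2 < p < \infty$; the subcritical hypothesis $(f_0)$ and the Trudinger--Moser inequality then allow passage to the limit in the nonlinear term and upgrade the convergence to strong convergence in $E$, furnishing the required convergent subsequence.

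Finally, with $(A_1)$--$(A_4)$ in force, Theorem~\ref{theo 3.2} yields an unbounded sequence of $G$-critical values of $J$ on $E$, hence a sequence $(u_k) \subset E$ of $G$-critical points with $J(u_k) \to \infty$. Under $(f_0)$--$(f_1)$ the functional $J$ is in fact of class $\mathcal{C}^1$ on $H^1(\mathbb{R}^2)$ (the Trudinger--Moser inequality controls the Nemytskii operator associated with $f$), so the weak slope coincides with the norm of the Fréchet derivative and each $u_k$ is a genuine critical point of $J$ restricted to $Fix(O(2)) = H^1_{rad}(\mathbb{R}^2)$. Because $(f_3)$ renders $J$ invariant under the isometric $O(2)$-action on $H^1(\mathbb{R}^2)$, the principle of symmetric criticality (Theorem~\ref{theo 4.2}) upgrades each $u_k$ to a critical point of $J$ on all of $H^1(\mathbb{R}^2)$, that is, a weak solution of $(P)$ in the sense of Definition~\ref{def 4.1}; since $J(u_k) \to \infty$, this completes the proof.
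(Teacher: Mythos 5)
Your proposal is correct and follows essentially the same route as the paper: verify $(A_1)$ via the setting $(\star)$, obtain boundedness of Palais--Smale sequences from $(f_2)$ and upgrade weak to strong convergence using the Strauss compact embedding and the Trudinger--Moser inequality for $(A_4)$, check $(A_2)$ from the superquadratic lower bound on $F$ together with norm equivalence on $Y_k$, check $(A_3)$ via the decay of $\alpha_k(p)$ with an optimized choice of $r_k$, and finally invoke Theorem~\ref{theo 3.2} and the principle of symmetric criticality. No substantive differences from the paper's argument.
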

    \begin{remark}
       We would like to mention that condition $(f_0)$ in Theorem $\ref{theo 4.3}$ was considered by several authors; see for example \cite{ZL}, \cite{FDR} and \cite{dMS}. In \cite{ZL}, the condition $(f_0)$ is replaced by the following condition: for every $\beta>0$, there exists $c>0$ such that $f(x,t)\le c e^{\beta t^2}$ for every $(x,t) \in \mathbb{R}^2 \times [0,\infty[$.\\
      \begin{example}
       Let $R>0$, let $x\in \mathbb{R}^2$. Let $\sigma>2$. Define  $\zeta: \mathbb{R}^2 \to (0,\infty), \;\;\zeta(x)=1+e^{-|x|^2}$.
        Typical examples of functions satisfying the assumptions $(f_0)$, $(f_1)$, $(f_2)$, $(f_3)$ and $(f_4)$ are
       \begin{equation*}
       f(x,t)=\zeta(x)
           \begin{cases}
               |t|^{\sigma-2}t, \;\;\text{if}\;\; |t|\ge R\\
               t^3, \;\;\text{if}\;\;|t|<R
           \end{cases}
       \end{equation*}
       and 
        \begin{equation*}
       f(x,t)=\zeta(x)
           \begin{cases}
               t(2+|t|)e^{|t|}, \;\;\text{if}\;\; |t|\ge R\\
               t^3e^{|t|}, \;\; \text{if}\;\; |t|<R
           \end{cases}
       \end{equation*}
      \end{example}
    \end{remark}
	To prepare for the proof of Theorem $\ref{theo 4.3}$, we first establish several lemmas. \\
    
	In 2001, do Ó and Souto \cite{OS} proved in the whole space $\mathbb{R}^2$ a version of the Trudinger-Moser inequality, namely, 
	\begin{enumerate}
		\item  if  $u\in H^1(\mathbb{R}^2)$ and  $\beta>0$, then
		\begin{equation}
			\label{eq 6}
			\int_{\mathbb{R}^2}\big(e^{\beta |u|^2}-1\big)dx < + \infty;
		\end{equation}
		\item if $0< \beta < 4\pi$ and $|u|_{L^2(\mathbb{R}^2)} \le m_0 $, then there exists a constant $n_0$, which depends only on $\beta$ and $m_0$, such that 
		\begin{equation*}
			\underset{|\nabla u|_{L^2(\mathbb{R}^2)} \le 1}{\sup} \int_{\mathbb{R}^2}\big(e^{\beta |u|^2}-1\big)dx \le n_0.\\
		\end{equation*}
		\end{enumerate}
        
		We will need the following standard result.
		\begin{lemma}
			\label{lem 4.1}
			Let $g\in L^1_{loc}(\mathbb{R})$. For $y_0\in \mathbb{R}$ fixed, set 
			\begin{equation*}
				v(t)=\int_{y_0}^{t}g(s)ds, \;\; t\in \mathbb{R}.
			\end{equation*}
			Then $v$ is a continuous function on $\mathbb{R}$.
		\end{lemma}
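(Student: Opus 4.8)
The plan is to reduce everything to the \emph{absolute continuity of the Lebesgue integral}. Since continuity is a local property, I would fix an arbitrary point $t_0 \in \mathbb{R}$ and show that $v(t) \to v(t_0)$ as $t \to t_0$. The starting observation is that for any $t$ one has $v(t) - v(t_0) = \int_{t_0}^{t} g(s)\,ds$, and therefore
\[
	|v(t) - v(t_0)| \le \left| \int_{t_0}^{t} |g(s)|\,ds \right|,
\]
where the integral is understood over the interval with endpoints $t_0$ and $t$ (the outer absolute value simply accounts for the two cases $t>t_0$ and $t<t_0$).

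Next I would localize. Restrict attention to the compact interval $I := [t_0 - 1, t_0 + 1]$, on which $g \in L^1(I)$ thanks to the hypothesis $g\in L^1_{loc}(\mathbb{R})$. The key tool is then the absolute continuity of the integral of an $L^1$ function: given $\varepsilon > 0$, there exists $\delta \in (0,1)$ such that
\[
	\int_E |g(s)|\,ds < \varepsilon \qquad \text{for every measurable } E \subseteq I \text{ with } |E| < \delta,
\]
where $|E|$ denotes the Lebesgue measure of $E$.

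Finally I would apply this with $E$ equal to the interval between $t_0$ and $t$, whose measure is exactly $|t - t_0|$. Hence, whenever $|t - t_0| < \delta$, we get $|v(t) - v(t_0)| \le \int_E |g(s)|\,ds < \varepsilon$, which is precisely continuity of $v$ at $t_0$. Since $t_0$ was arbitrary, $v$ is continuous on all of $\mathbb{R}$.

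The statement is classical and presents no genuine obstacle; the only point requiring a little care is that one must confine the argument to a fixed compact interval $I$ before invoking absolute continuity, since $g$ is merely \emph{locally} integrable rather than globally integrable. An alternative route — approximating $g$ in $L^1(I)$ by continuous functions and using that indefinite integrals of continuous functions are continuous — would also work, but it is slightly longer and ultimately relies on the same localization.
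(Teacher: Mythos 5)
Your proof is correct and complete: the reduction to the absolute continuity of the Lebesgue integral on a fixed compact interval $I=[t_0-1,t_0+1]$ is exactly the standard argument, and your remark that the localization step is needed because $g$ is only locally integrable is the one point of genuine care. The paper states this lemma without proof (it is cited only as ``the following standard result''), so there is no argument of the authors to compare against; your write-up supplies precisely the proof one would expect them to have in mind.
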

	\begin{lemma}
		\label{lem 4.2}
		Let $f$ satisfies assumptions $(f_0)$ and $(f_1)$. Then, for given $\varepsilon >0$, there exists $ c = c(\varepsilon)  >0$ such that 
		\begin{equation*}
			|f(x,t)|\le 2\varepsilon |t|+ c\big(e^{\beta t^2}-1\big), \;\; \text{for every}\;\; (x,t)\in \mathbb{R}^2\times \mathbb{R}.
            \end{equation*}
	\end{lemma}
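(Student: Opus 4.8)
The plan is to partition $\mathbb{R}$ according to the size of $|t|$ into three regions — a neighborhood of the origin, a neighborhood of infinity, and a compact shell in between — and to show that on each region one of the two terms on the right-hand side dominates $|f(x,t)|$, uniformly in $x$. Throughout, $\beta>0$ is the fixed exponent of the statement and $\varepsilon>0$ is given; the constant $c$ will be assembled at the end from the constants produced on each region, so that it depends only on $\varepsilon$ (and $\beta$).

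First I would invoke assumption $(f_1)$. Since $\lim_{t\to 0} f(x,t)/t = 0$ uniformly with respect to $x$, there is $\delta>0$ such that $|f(x,t)|\le 2\varepsilon|t|$ for all $x\in\mathbb{R}^2$ and all $|t|\le\delta$. As $e^{\beta t^2}-1\ge 0$ everywhere, the desired inequality already holds on $\{|t|\le\delta\}$ for any nonnegative $c$. Next I would treat large $|t|$ via $(f_0)$: since $f(x,t)/e^{\beta t^2}\to 0$ uniformly in $x$ (and analogously as $t\to-\infty$, which for the odd nonlinearities of interest is ensured by $(f_4)$), there is $M>\delta$ with $|f(x,t)|\le e^{\beta t^2}$ for $|t|\ge M$. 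Enlarging $M$ so that $e^{\beta t^2}\ge 2$, equivalently $e^{\beta t^2}\le 2(e^{\beta t^2}-1)$, for $|t|\ge M$, I obtain $|f(x,t)|\le 2(e^{\beta t^2}-1)$ on $\{|t|\ge M\}$; thus the inequality holds there whenever $c\ge 2$.

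It remains to cover the shell $\delta\le|t|\le M$. On this set $e^{\beta t^2}-1\ge e^{\beta\delta^2}-1=:\kappa>0$, so it suffices to find a constant $K$ with $|f(x,t)|\le K$ uniformly over $x\in\mathbb{R}^2$ and $\delta\le|t|\le M$; then $c\ge K/\kappa$ settles the shell, and taking $c:=\max\{2,K/\kappa\}$ yields the inequality for every $(x,t)$.

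I expect the existence of the uniform bound $K$ to be the only genuine obstacle. The $t$-interval $[\delta,M]$ (together with its negative counterpart) is compact, but $x$ ranges over the unbounded domain $\mathbb{R}^2$, so continuity of $f$ by itself does not immediately deliver boundedness on the shell. The bound follows from the uniformity in $x$ built into the hypotheses $(f_0)$ and $(f_1)$, reinforced by the radial invariance $(f_3)$, which reduces the $x$-dependence to the radial variable; I would make this uniformity explicit and use it to extract $K$, after which the three regions combine to give the claim.
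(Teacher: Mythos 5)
Your decomposition is the same one the paper uses: $(f_1)$ handles small $|t|$ with the term $2\varepsilon|t|$, and $(f_0)$ handles large $|t|$, after which $e^{\beta t^2}$ is absorbed into $c\bigl(e^{\beta t^2}-1\bigr)$ because $e^{\beta t^2}-1$ is bounded away from $0$ off a neighbourhood of the origin. The paper does this with only two regions: it takes the threshold $c_\varepsilon$ from $(f_1)$ and simply asserts $|f(x,t)|\le \varepsilon e^{\beta t^2}$ for \emph{all} $|t|>c_\varepsilon$, i.e.\ it reads $(f_0)$ as a bound that is uniform in $x$ and already valid at the threshold produced by $(f_1)$. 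Your three-region version is more honest about what $(f_0)$ literally gives, and your small-$|t|$ and large-$|t|$ steps are correct as written (the remark that oddness via $(f_4)$ covers $t\to-\infty$ is a sensible touch the paper omits).

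The gap is in your middle region. You correctly observe that continuity of $f$ on the non-compact set $\mathbb{R}^2\times\{\delta\le|t|\le M\}$ does not yield the uniform bound $K$, but your proposed fix does not close the hole: $(f_0)$ as stated asserts no uniformity in $x$ at all, $(f_1)$ only controls a neighbourhood of $t=0$ and says nothing on the shell, and $(f_3)$ is not among the hypotheses of this lemma --- and even if it were, radial invariance only reduces $x\in\mathbb{R}^2$ to $|x|\in[0,\infty)$, which is still non-compact. So the sentence ``the bound follows from the uniformity built into the hypotheses'' is an assertion, not an argument; as the hypotheses are literally written, a counterexample of the form $f(x,t)=g(|x|)h(t)$ with $g$ unbounded and $h$ supported in the shell satisfies $(f_0)$ and $(f_1)$ but violates the conclusion. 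What is really needed (and what the paper tacitly assumes when it writes its inequality for all $|t|>c_\varepsilon$) is that the limit in $(f_0)$ be uniform in $x$ and strong enough to reach down to the threshold coming from $(f_1)$. You should either state that strengthened reading of $(f_0)$ explicitly and use it to bound the shell, or acknowledge that the lemma requires it; as the proposal stands, the existence of $K$ is exactly the unproved step.
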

	\begin{proof}
		From $(f_1)$, for given $\varepsilon>0$, there is $c_\varepsilon>0$ such that
        \begin{equation}
        \label{eq 7}
            |f(x,t)|\le 2\varepsilon |t|, \;\; \text{for every}\;\; |t|\le c_\varepsilon,
        \end{equation}
        and from $(f_0)$, for each $\beta>0$ we have 
        \begin{equation}
        \label{eq 8}
            |f(x,t)| \le \varepsilon e^{\beta t^2}, \;\; \text{for every}\;\; |t|> c_\varepsilon.
        \end{equation}
        Now, from $(\ref{eq 8})$, there exists $c = c(\varepsilon)>0$ such that 
        \begin{equation}
        \label{eq 9}
            |f(x,t)| \le c \big(e^{\beta t^2}-1\big), \;\; \text{for every}\;\; |t|> c_\varepsilon.
        \end{equation}
        The result follows from  $(\ref{eq 9})$ and $(\ref{eq 7})$.
        \end{proof}
        
        \begin{lemma} 
        \label{lem 4.3}
        Let $f$ satisfies assumptions $(f_0)$ and $(f_1)$. Then, for given $\varepsilon>0$, for each $\delta\ge1$, there is $C=C(\varepsilon, \delta)>0$ such that 
        \begin{equation*}
            |F(x,t)|\le \varepsilon t^2 + C|t|^\delta \Big( e^{\beta t^2}-1\Big), \;\; \text{for every}\;\; (x,t)\in \mathbb{R}^2\times \mathbb{R},
        \end{equation*}
    where
	\begin{equation*}
		F(x,t) := \int_{0}^{t}f(x,s)ds.
	\end{equation*}
        \end{lemma}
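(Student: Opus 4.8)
The plan is to obtain the estimate by integrating the pointwise bound on $f$ provided by Lemma \ref{lem 4.2} and then converting the resulting power of $|t|$ into $|t|^\delta$. Since the right-hand side of Lemma \ref{lem 4.2} is even in the second variable, for every $(x,t)\in\mathbb{R}^2\times\mathbb{R}$ I would first record the elementary estimate
\begin{equation*}
|F(x,t)| = \left|\int_0^t f(x,s)\,ds\right| \le \int_0^{|t|}\bigl(2\varepsilon s + c(e^{\beta s^2}-1)\bigr)\,ds = \varepsilon t^2 + c\int_0^{|t|}(e^{\beta s^2}-1)\,ds,
\end{equation*}
where $c=c(\varepsilon)$ is the constant from Lemma \ref{lem 4.2}. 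The first term already matches the desired $\varepsilon t^2$, so everything reduces to controlling the exponential integral.

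For that integral I would invoke monotonicity: since $s\mapsto e^{\beta s^2}-1$ is nondecreasing on $[0,\infty)$, the integrand on $[0,|t|]$ is bounded by its endpoint value, whence
\begin{equation*}
\int_0^{|t|}(e^{\beta s^2}-1)\,ds \le |t|\,(e^{\beta t^2}-1).
\end{equation*}
At this stage the proof will have established $|F(x,t)|\le \varepsilon t^2 + c\,|t|\,(e^{\beta t^2}-1)$, and it only remains to replace the factor $|t|$ by $C|t|^\delta$.

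This replacement is the one genuinely delicate point, and where I expect the main obstacle to lie: for $\delta>1$ the ratio $|t|/|t|^\delta=|t|^{1-\delta}$ blows up as $|t|\to 0$, so no single constant can work on all of $\mathbb{R}$. I would resolve this with a case split at the threshold $c_\varepsilon$ furnished by $(f_1)$, namely inequality $(\ref{eq 7})$ in the proof of Lemma \ref{lem 4.2}. For $|t|\le c_\varepsilon$ one has $|f(x,s)|\le 2\varepsilon|s|$ on the whole integration range, hence $|F(x,t)|\le \varepsilon t^2$, and the claimed bound holds trivially because the second term is nonnegative. For $|t|>c_\varepsilon$, using $\delta\ge 1$ the map $\tau\mapsto\tau^{1-\delta}$ is nonincreasing, so $|t|^{1-\delta}\le c_\varepsilon^{1-\delta}$, and therefore $c\,|t|\le c\,c_\varepsilon^{1-\delta}|t|^\delta$. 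Combining this with the two previous displays yields $|F(x,t)|\le \varepsilon t^2 + C|t|^\delta(e^{\beta t^2}-1)$ with $C:=c\,c_\varepsilon^{1-\delta}$, a finite constant depending on $\varepsilon$, $\delta$ (and the fixed $\beta$). Taking this $C=C(\varepsilon,\delta)$ settles both cases and completes the proof; the only subtlety, the degeneracy near $t=0$ for $\delta>1$, is absorbed by the observation that there the sharper bound $|F|\le \varepsilon t^2$ coming directly from $(f_1)$ already dominates the exponential term.
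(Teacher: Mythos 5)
Your proof is correct and follows essentially the same route as the paper: both split at the threshold $c_\varepsilon$ from $(f_1)$, integrate the pointwise bounds of Lemma \ref{lem 4.2}, and use $|t|>c_\varepsilon$ together with $\delta\ge 1$ to trade the factor $|t|$ for $C|t|^\delta$. In fact your write-up supplies the details (monotonicity of $s\mapsto e^{\beta s^2}-1$ and the conversion $|t|\le c_\varepsilon^{1-\delta}|t|^\delta$) that the paper's two-line proof leaves implicit in the case $|t|>c_\varepsilon$.
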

        \begin{proof}
        From $(\ref{eq 7})$, for given $\varepsilon>0$, there is $c_\varepsilon>0$ such that 
        \begin{equation*}
            |F(x,t)|\le \varepsilon t^2, \;\; \text{for every}\;\; |t|\le c_\varepsilon,
        \end{equation*}
        and from $(\ref{eq 9})$, for each $\delta\ge 1$, there is $C=C(\varepsilon, \delta)>0$ such that
        \begin{equation*}
            |F(x,t)|\le C|t|^\delta \Big( e^{\beta t^2}-1\Big), \;\; \text{for every}\;\; |t|> c_\varepsilon.
        \end{equation*}
        The lemma is proved.
        \end{proof}
        The next result is from \cite{dMS}.
        \begin{lemma}[\cite{dMS}, Lemma 2.2]
        \label{lem 4.4}
            Let $\beta>0$ and $r>1$. Then for each $\alpha>0$, there exists a positive constant $C=C(\alpha)$ such that for all $t\in \mathbb{R}$ we have 
            \begin{equation*}
              \Big( e^{\beta t^2}-1\Big)^r \le C \Big( e^{\alpha\beta t^2}-1\Big).  
            \end{equation*}
 In particular, if $u\in H^1(\mathbb{R}^2)$ then $\Big( e^{\beta u^2}-1\Big)^r$ belongs to $L^1(\mathbb{R}^2)$.
        \end{lemma}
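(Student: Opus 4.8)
The plan is to reduce this two-variable, two-sided estimate to a one-variable boundedness question and then analyze the two limiting regimes separately. Since both sides are even in $t$ and depend on $t$ only through $\beta t^2$, I would set $s := \beta t^2 \ge 0$ and study the single-variable quotient
\[
\phi(s) := \frac{(e^{s}-1)^r}{e^{\alpha s}-1}
\]
on $(0,\infty)$. The asserted inequality is then equivalent to the boundedness of $\phi$, with the constant $C(\alpha)$ being any upper bound for $\sup_{s>0}\phi(s)$. The function $\phi$ is continuous and strictly positive on $(0,\infty)$ (the denominator vanishes only at $s=0$), so it suffices to control its two boundary limits and invoke the extreme value theorem on the compactified half-line.

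As $s\to 0^+$, using $e^{s}-1 = s + o(s)$ and $e^{\alpha s}-1 = \alpha s + o(s)$, one obtains $\phi(s)\sim s^{r-1}/\alpha \to 0$; the exponent $r-1$ is positive precisely because $r>1$, which is exactly where the hypothesis $r>1$ is consumed. As $s\to\infty$, I would factor out the dominant exponentials, writing $(e^{s}-1)^r = e^{rs}(1-e^{-s})^r$ and $e^{\alpha s}-1 = e^{\alpha s}(1-e^{-\alpha s})$, so that $\phi(s) = e^{(r-\alpha)s}\,(1-e^{-s})^r/(1-e^{-\alpha s})$; since the last two factors tend to $1$, $\phi$ has a finite limit as long as $r-\alpha\le 0$. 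Combining these two limits, $\phi$ extends to a continuous function on $[0,+\infty]$ and is therefore bounded, which yields the pointwise estimate with $C(\alpha)=\sup_{s>0}\phi(s)<\infty$.

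The step I expect to be the crux is this large-$s$ regime: the comparison of exponential rates shows that boundedness forces $\alpha\ge r$, so the estimate cannot genuinely hold for every $\alpha>0$ — the rates $e^{r\beta t^2}$ and $e^{\alpha\beta t^2}$ must be compatible. For the intended application this costs nothing, since one is free to choose $\alpha$ as large as desired (e.g. any $\alpha\ge r$), and I would phrase the statement with this understanding. Finally, for the \emph{in particular} assertion I would fix $u\in H^1(\mathbb{R}^2)$ and apply the pointwise bound with $t=u(x)$ and this large value of $\alpha$: because $(\ref{eq 6})$ guarantees $\int_{\mathbb{R}^2}\bigl(e^{\alpha\beta u^2}-1\bigr)\,dx<\infty$ for the positive exponent $\alpha\beta$, integrating $(e^{\beta u^2}-1)^r\le C\bigl(e^{\alpha\beta u^2}-1\bigr)$ over $\mathbb{R}^2$ shows at once that $(e^{\beta u^2}-1)^r\in L^1(\mathbb{R}^2)$.
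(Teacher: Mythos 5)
The paper gives no proof of this lemma; it is simply quoted from \cite{dMS}, so the only meaningful comparison is with the source and with how the lemma is used later. Your argument is correct and complete: reducing to the one-variable quotient $\phi(s)=(e^{s}-1)^r/(e^{\alpha s}-1)$, showing $\phi(s)\sim s^{r-1}/\alpha\to 0$ as $s\to 0^+$ (which is exactly where $r>1$ is needed), writing $\phi(s)=e^{(r-\alpha)s}(1-e^{-s})^r/(1-e^{-\alpha s})$ for the large-$s$ regime, and concluding boundedness by continuity on the compactified half-line. More importantly, you have correctly spotted that the statement as printed is false: for $0<\alpha<r$ the left-hand side grows like $e^{r\beta t^2}$ while the right-hand side grows like $e^{\alpha\beta t^2}$, so no constant can work. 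The original Lemma 2.2 of \cite{dMS} assumes $\alpha>r$, and that is also how the lemma is actually invoked in Remark \ref{rm 5}, where it is applied with $r=q_0$ and $\alpha_0>q_0$; the hypothesis ``for each $\alpha>0$'' should therefore read ``for each $\alpha\ge r$'' (or $\alpha>r$). With that correction your proof, including the deduction of the $L^1$ conclusion from (\ref{eq 6}) applied to the exponent $\alpha\beta$, is exactly right. One small simplification worth recording: for $\alpha=r$ one may take $C=1$, since $(e^{a}-1)^r\le e^{ra}-1$ for all $a\ge 0$ and $r\ge 1$, as one sees by comparing derivatives via $(e^{a}-1)^{r-1}\le e^{(r-1)a}$.
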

        \begin{remark}
        \label{rm 5}
            As a consequence of $(\ref{eq 6})$ and Lemma $\ref{lem 4.4}$ and Hölder inequality, if $\beta>0$ and $q>0$, then the function $|u|^q \Big( e^{\beta u^2}-1\Big)$ belongs to $L^1(\mathbb{R}^2)$ for all $u\in H^1(\mathbb{R}^2)$.\\
            Indeed, let $u\in H^1(\mathbb{R}^2)$ and let $q>0$. Choose $q_0>1$ such that $\alpha q\ge 2$ where $\dfrac{1}{q_0}= 1- \dfrac{1}{\alpha}$. By Hölder inequality and for all $\alpha_0> q_0$, we have 
           \begin{align*}
               \int_{\mathbb{R}^2}|u|^q \Big( e^{\beta u^2}-1\Big) dt &\le \Bigg[   \int_{\mathbb{R}^2} \Big( e^{\beta u^2}-1\Big)^{q_0}dt\Bigg]^{\frac{1}{q_0}}\Bigg[ \int_{\mathbb{R}^2} |u|^{\alpha q} dt\Bigg]^{\frac{1}{\alpha}}\\
               &\le C (\alpha_0) \Bigg[  \int_{\mathbb{R}^2} \Big( e^{\alpha_0 \beta u^2}-1\Big)dt\Bigg]^{\frac{1}{q_0}} \Bigg[ \Big(\int_{\mathbb{R}^2} |u|^{\alpha q} dt\Big)^{\frac{1}{\alpha q}}\Bigg]^{q}\\
               &\le C_0 |u|_{\alpha q}^{q}.
               \end{align*}
          for some $C_0>0$. By the continuous Sobolev embedding theorem, there exists $c_1>0$ such that $|u|_{\alpha q}^{q} \le c_1 \|u\|^q$.\\
          We obtain
          \begin{equation*}
              \int_{\mathbb{R}^2}|u|^q \Big( e^{\beta u^2}-1\Big) dt \le c_2 \|u\|^q < \infty,
          \end{equation*}
          for some $c_2>0$.
        \end{remark}
	\begin{lemma}
    \label{lem 4.5}
		Let $f$ satisfies assumptions $(f_0),(f_1),(f_3)$ and $(f_4)$. Then, the functional $J(u)$ given in $(\ref{eq 5})$ is well defined, even, continuous, and $O(2)-$invariant on $E$.
	\end{lemma}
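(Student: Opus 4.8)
The plan is to verify the four asserted properties separately for the quadratic part $\tfrac12\|u\|^2$ and the nonlinear part $\Psi(u):=\int_{\mathbb{R}^2}F(x,u)\,dx$ of $J$; the quadratic part, being built from the Hilbert norm of $E$, is plainly finite, even, $O(2)$-invariant and continuous, so the entire difficulty lies in $\Psi$. For well-definedness I would apply Lemma~\ref{lem 4.3} with a fixed $\delta\ge 1$ to get $|F(x,u)|\le \varepsilon u^2 + C|u|^\delta\bigl(e^{\beta u^2}-1\bigr)$; the first summand is integrable since $u\in L^2(\mathbb{R}^2)$, and the second is integrable by Remark~\ref{rm 5} with $q=\delta$, so $\Psi(u)<\infty$. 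For evenness, $(f_4)$ together with the substitution $s\mapsto -s$ in the definition of $F$ gives $F(x,-t)=F(x,t)$, hence $\Psi(-u)=\Psi(u)$ and $J(-u)=J(u)$. For $O(2)$-invariance I would change variables $y=g_0^{-1}x$ in each integral defining $J(g_0u)$: orthogonality of $g_0$ keeps $|\nabla u|$ and the measure unchanged, while $(f_3)$ yields $F(g_0y,t)=F(y,t)$, so $J(g_0u)=J(u)$ (on $E$ this is anyway immediate, since radial functions satisfy $g_0u=u$).

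The main obstacle is continuity of $\Psi$. Let $u_n\to u$ in $E$; I must show $\Psi(u_n)\to\Psi(u)$. The strategy is the standard subsequence argument: starting from an arbitrary subsequence, I would use the Strauss compact embedding (Theorem~\ref{theo 4.1}) and the $H^1$-convergence to extract a further subsequence, not relabelled, along which $u_n\to u$ in $L^p(\mathbb{R}^2)$ for every $p\ge 2$ and $u_n\to u$ almost everywhere. Continuity of $t\mapsto F(x,t)$ (Lemma~\ref{lem 4.1}) then gives $F(x,u_n)\to F(x,u)$ almost everywhere.

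To pass to the limit under the integral sign I would establish equi-integrability of the family $\{F(x,u_n)\}$. Here the bound of Lemma~\ref{lem 4.3}, the uniform bound $\|u_n\|\le M$ coming from convergence, and the Trudinger–Moser inequality $(\ref{eq 6})$ combined with Lemma~\ref{lem 4.4} and Hölder's inequality let me control $\int_{\mathbb{R}^2}|u_n|^\delta\bigl(e^{\beta u_n^2}-1\bigr)\,dx$ uniformly in $n$; the freedom granted by the subcritical growth $(f_0)$ to take $\beta$ as small as needed (so that, after normalizing the gradient, $\beta M^2<4\pi$) is exactly what makes this uniform estimate work. A generalized dominated convergence argument (or Vitali's theorem) then yields $\Psi(u_n)\to\Psi(u)$ along the subsequence; since every subsequence admits a further subsequence with this same limit, the full sequence converges, proving continuity. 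The decisive and most delicate point is precisely this uniform integrability of the exponential terms $|u_n|^\delta\bigl(e^{\beta u_n^2}-1\bigr)$, where the interplay between $(f_0)$, the Trudinger–Moser inequality, and the boundedness of $\|u_n\|$ is essential.
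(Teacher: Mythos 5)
Your proposal is correct in substance, and the treatment of well-definedness, evenness and $O(2)$-invariance coincides with the paper's. The continuity argument, however, takes a genuinely different route. The paper extracts a subsequence converging almost everywhere and invokes \cite[Proposition 2.7]{dMS} to obtain a single majorant $h\in H^1(\mathbb{R}^2)$ with $|u_n|\le h$ a.e.\ for all $n$; Lemma \ref{lem 4.3} then dominates $|F(x,u_n)|$ by the fixed integrable function $\varepsilon h^2+C|h|^q\bigl(e^{\beta h^2}-1\bigr)$ (integrable by Remark \ref{rm 5} for \emph{any} $\beta>0$), and the ordinary dominated convergence theorem finishes the proof. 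You instead aim at Vitali's theorem via uniform integrability of $\{F(\cdot,u_n)\}$, exploiting the freedom in $(f_0)$ to take $\beta$ so small that $\beta M^2<4\pi$ and then using the quantitative Trudinger--Moser bound together with Lemma \ref{lem 4.4} and H\"older. Your route avoids the auxiliary dominating function (and the external citation it rests on), at the price of needing the sharp form of the Trudinger--Moser inequality; the paper's route is shorter and never needs $\beta$ small. One point in your sketch should be tightened: a uniform bound on $\int_{\mathbb{R}^2}|u_n|^{\delta}\bigl(e^{\beta u_n^2}-1\bigr)\,dx$ is not by itself equi-integrability, and on the unbounded domain $\mathbb{R}^2$ Vitali also requires tightness. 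Both follow if, after H\"older, you observe that the factor $\bigl(\int(e^{\beta u_n^2}-1)^{a'}\bigr)^{1/a'}$ is uniformly bounded while $\int_A|u_n|^{\delta a}\,dx$ is small uniformly in $n$ for $|A|$ small or $A$ far from the origin, which is a consequence of the strong convergence $u_n\to u$ in $L^{\delta a}(\mathbb{R}^2)$ (here the continuous Sobolev embedding already suffices, since you assume strong $H^1$ convergence; Strauss compactness is not needed). Also beware that the ``generalized dominated convergence'' variant with majorants $g_n=\varepsilon u_n^2+C|u_n|^{\delta}\bigl(e^{\beta u_n^2}-1\bigr)$ is somewhat circular, since proving $\int g_n\to\int g$ is essentially the same problem; Vitali is the cleaner formulation of your idea.
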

	\begin{proof}
		(1) \textbf{The functional $J$ is well defined} : \\
By Lemma $\ref{lem 4.3}$ and Remark $\ref{rm 5}$ and $(\ref{eq 6})$, for every $u\in E_{rad}$, we have
\begin{equation*}
	\int_{\mathbb{R}^2}|F(x,u)| dx < \infty. 
\end{equation*}
Therefore 
\begin{equation*}
	J(u)= \dfrac{\|u\|^2}{2} - \int_{\mathbb{R}^2} F(x,u(x))\, dx \in \mathbb{R}.
\end{equation*}
(2) \textbf{The functional $J(u)$ is even }:

	By assumption $(f_4)$, we have
	\begin{equation*}
		F(x,-u):= \int_{0}^{-u(x)}f(x,t)\,dt
		= \int_{0}^{u(x)}f(x,-s)\,d(-s)
		=- \int_{0}^{u(x)}(-f(x,s))\,ds
		= F(x,u(x)).
	\end{equation*}
	So, for every $u\in E$,
	\begin{equation*}
	J(-u)= \frac{\|-u\|^2}{2}- \int_{\mathbb{R}^2}F(x,-u)\,dx = J(u).
	\end{equation*}
(3) \textbf{The functional $J$ is continuous} : according to the assumption $(f_0)$, $f(x,.)\in L^1_{loc}(\mathbb{R})$ for every $x\in \mathbb{R}^2$. By Lemma $\ref{lem 4.1}$, for every $x \in \mathbb{R}^2$, $F(x,.)$ is continuous on $\mathbb{R}$.\\
Let $(u_n)\subset H^1(\mathbb{R}^2)$. Assume that $u_n\to u$ in $H^1(\mathbb{R}^2)$. Then, by the Sobolev embedding theorem, \[u_n\to u \;\; \text{in}\;\; L^p(\mathbb{R}^2), \;\;\text{for}\;\; 2\le p<\infty.\] For a subsequence still denoted by $(u_n)$, we have 
\begin{equation*}
	(u_n(x)) \to u(x), \;\; \text{almost everywhere}, \;\; \text{in}\;\; \mathbb{R}^2, \;\; \text{as}\;\; n \to \infty.
\end{equation*}
There also exists $h(x)\in H^1(\mathbb{R}^2) $ such that (see \cite[Proposition 2.7]{dMS})
\begin{equation*}
	|u_n(x)|\le h(x), \;\; \text{for almost all}\;\; x \in \mathbb{R}^2, \;\; \text{for all}\;\; n \in \mathbb{N}.
\end{equation*}
By Lemma $\ref{lem 4.3}$, for every $\varepsilon>0$, for every $q\ge1$, there is $C>0$ such that
\begin{equation*}
	\big|F(x,u_n(x))\big| \le \varepsilon \Big(h(x)\Big)^2 + C|h(x)|^q\big(e^{\beta (h(x))^2}-1\big), \;\; \text{almost everywhere}, \;\; \text{in}\;\; \mathbb{R}^2.
\end{equation*}
Since $h(x) \in H^1(\mathbb{R}^2)$, we have that $h(x)\in L^p(\mathbb{R}^2)$ for $2\le p<\infty$ and thus \[\Big(h(x)\Big)^2\in L^1(\mathbb{R}^2).\] On the other hand, by Remark $\ref{rm 5}$,
\begin{equation*}
	C\int_{\mathbb{R}^2} |h(x)|^q\Bigg( e^{\beta (h(x))^2} - 1\Bigg)dx < \infty.
\end{equation*}
Since $F(x,.)$ is continuous for every $x\in \mathbb{R}^2$, then
\begin{equation*}
	F(x, u_n(x)) \to F(x, u(x)),\;\; \text{almost everywhere},\;\; \text{in}\;\; \mathbb{R}^2.
\end{equation*}
The Lebesgue dominated convergence theorem implies that
\begin{equation}
	\label{eq 10}
	\int_{\mathbb{R}^2} F(x, u_n(x))\,dx \to \int_{\mathbb{R}^2}  F(x, u(x))\,dx, \;\; \text{as}\;\; n \to \infty.
\end{equation}
Therefore, \[J(u_n)\to J(u), \;\; \text{as}\;\; n \to \infty.\]
(4) \textbf{The functional $J$ is $O(2)-$invariant} on $E$ because, under assumption $(f_3)$, we have 
\begin{equation*}
    J(g_0u(x))= J(u(g_0^{-1}x))=J(u(x)).
\end{equation*}
Indeed, let $g_0\in O(2)$. Since the action of $O(2)$ on $E$ is isometric, then $\|g_0u\|=\|u\|$. Moreover, if $y=g_0^{-1}x$, then under assumption $(f_3)$, we have 
\begin{equation*}
    F(x, u(g_0^{-1}x)) =F(g_0y,u(y))
    = \int_{0}^{u(y)}f(g_0y,t)\,dt
    = \int_{0}^{u(y)}f(y,t)\,dt
    =F(y,u(y)).
\end{equation*}
	\end{proof}
	\begin{lemma}
		\label{lem 4.6}
Let $J$ be given by $(\ref{eq 5})$. Under assumptions $(f_0)$, $(f_1)$, for every $u\in H^1(\mathbb{R}^2)$ and for every $w \in C_{0}^{\infty}(\mathbb{R}^2)$ $($the set of infinitely differentiable functions with compact support in $\mathbb{R}^2$$)$, the Gateaux derivative of $J$ defined by
	\begin{equation*}
		J'(u)w:= \lim\limits_{t\to0} \dfrac{J(u+tw)-J(u)}{t} =\int_{\mathbb{R}^2} \Bigg(\nabla u \nabla w +uw  -f(x,u)w\Bigg)dx
	\end{equation*}
	exists and is continuous.
	\end{lemma}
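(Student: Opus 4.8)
The plan is to write $J=\tfrac12\|\cdot\|^2-\Psi$ with $\Psi(u):=\int_{\mathbb{R}^2}F(x,u)\,dx$ and to differentiate the two pieces separately. For the quadratic part, expanding $\|u+tw\|^2=\|u\|^2+2t(u,w)+t^2\|w\|^2$ gives
\[
\frac{\tfrac12\|u+tw\|^2-\tfrac12\|u\|^2}{t}=(u,w)+\frac{t}{2}\|w\|^2\longrightarrow (u,w)=\int_{\mathbb{R}^2}\big(\nabla u\,\nabla w+uw\big)\,dx\quad (t\to0),
\]
which requires no growth hypothesis and already produces the linear terms of the claimed formula.

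The core of the argument is $\Psi$. Since $f(x,\cdot)$ is continuous, the fundamental theorem of calculus gives $\partial_t F(x,t)=f(x,t)$, so $F(x,\cdot)\in\mathcal{C}^1(\mathbb{R})$ (Lemma~\ref{lem 4.1} already provides continuity). By the mean value theorem, for each $x$ and each $t\neq0$ there is $\theta=\theta(x,t)\in(0,1)$ with
\[
\frac{F(x,u+tw)-F(x,u)}{t}=f\big(x,\,u+\theta tw\big)\,w,
\]
and this tends a.e. to $f(x,u)w$ as $t\to0$ by continuity of $f$. To pass to the limit under the integral I would build a $t$-uniform integrable majorant, and here the compact support of $w$ is decisive: writing $K:=\operatorname{supp}w$ and $M:=\|w\|_\infty$, for $|t|\le1$ one has $(u+\theta tw)^2\le 2u^2+2M^2$ on $K$, so Lemma~\ref{lem 4.2} yields
\[
\big|f(x,u+\theta tw)\,w\big|\le M\Big[\,2\varepsilon(|u|+M)+c\,e^{2\beta M^2}\big(e^{2\beta u^2}-1\big)+c\big(e^{2\beta M^2}-1\big)\Big]\mathbf{1}_K.
\]
Since $u\in L^2(K)$, the set $K$ is bounded, and $\int_{\mathbb{R}^2}\big(e^{2\beta u^2}-1\big)\,dx<\infty$ by the Trudinger--Moser inequality $(\ref{eq 6})$, the right-hand side lies in $L^1(\mathbb{R}^2)$; dominated convergence then gives $\Psi'(u)w=\int_{\mathbb{R}^2}f(x,u)w\,dx$, and adding the quadratic part establishes the stated expression for $J'(u)w$.

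For continuity I would fix $w\in\mathcal{C}_0^\infty(\mathbb{R}^2)$ and take $u_n\to u$ in $H^1(\mathbb{R}^2)$. The quadratic contribution $w\mapsto(u_n-u,w)$ has dual norm $\|u_n-u\|\to0$, hence is continuous. For the nonlinear contribution I would use the standard subsequence device: along a subsequence $u_n\to u$ in every $L^p$ ($2\le p<\infty$), hence a.e., and (as in the proof of Lemma~\ref{lem 4.5}, via \cite{dMS}) dominated by some $h\in H^1(\mathbb{R}^2)$; then $f(x,u_n)\to f(x,u)$ a.e. by continuity, and on $K$ the bound $|u_n|\le h$ together with Lemma~\ref{lem 4.2} produces the $n$-uniform majorant $M\big[2\varepsilon h+c\big(e^{\beta h^2}-1\big)\big]\mathbf{1}_K\in L^1(\mathbb{R}^2)$, again integrable by $(\ref{eq 6})$. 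Dominated convergence gives $\int_{\mathbb{R}^2}f(x,u_n)w\,dx\to\int_{\mathbb{R}^2}f(x,u)w\,dx$; as the limit is independent of the extracted subsequence, the full sequence converges, so $u\mapsto J'(u)w$ is continuous.

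The main obstacle is taming the critical exponential nonlinearity under the integral sign: because $(f_0)$ permits growth of order $e^{\beta t^2}$, the integrand is not controlled by any fixed power of $u$, and no naive $L^p$ bound suffices. It is precisely the compact support of the test function $w$---which confines the integral to the bounded set $K$ and bounds the shifted argument $u+\theta tw$ by $|u|+M$ there---that reduces the estimate to the exponential integrability $(\ref{eq 6})$ supplied by the Trudinger--Moser inequality, thereby closing the dominated-convergence arguments in both the existence and the continuity steps.
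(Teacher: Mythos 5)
Your proof is correct and follows essentially the same route as the paper's: split $J$ into the quadratic part and $\Psi(u)=\int_{\mathbb{R}^2}F(x,u)\,dx$, apply the mean value theorem to the difference quotient of $F$, pass to the limit by dominated convergence, and use the a.e.-convergence-plus-dominating-function device for the continuity of $u\mapsto J'(u)w$. If anything, your domination step is more carefully justified than the paper's, since you exploit the compact support of $w$ to produce a majorant independent of $t$ and $\theta$, whereas the bound invoked in the paper still depends on $t$ through the mean-value parameter.
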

	\begin{proof}
		\textbf{The functional $J$ is Gateaux differentiable}.	Let $u \in H^1(\mathbb{R}^2)$ and let $w \in C_{0}^{\infty}(\mathbb{R}^2)$. For $x\in \mathbb{R}^2$ and $|t|\in (0,1)$ we have 
		\begin{equation*}
			\frac{J(u+tw)-J(u)}{t} = (u,w)+\dfrac{t}{2}\|w\|^2 -\frac{1}{t} \Bigg(\int_{\mathbb{R}^2} \Big( F(x,u+tw)-F(x,u)\Big) \Bigg) dx.
		\end{equation*}
	By the mean value theorem, there exists $\alpha \in (0,1)$ such that 
		\begin{equation*}
\Big|	\frac{F(x,u+tw)-F(x,u)}{t} \Big| = \Big| f(x, u +\alpha t w) w \Big|.
		\end{equation*}
      By Lemma $\ref{lem 4.2}$, we have 
      \begin{equation*}
         \Big| f(x, u +\alpha t w) w \Big| \le 2\varepsilon |u+\alpha t w|\;|w| + c |w| \big(e^{\beta (u+\alpha tw)^2}-1\big). 
      \end{equation*}
      By Remark $\ref{rm 5}$, we see that $f(x, v)v\in L^1(\mathbb{R}^2)$, for every $v\in E$.
		Hence, the Lebesgue dominated convergence theorem and the continuity of $f$ (assumption $(f_0)$) imply 
		\begin{equation*}
				\lim\limits_{t\to 0} \frac{1}{t} \int_{\mathbb{R}^2} \Bigg(F(x,u+tw)-F(x,u)\Bigg)dx = \int_{\mathbb{R}^2} \lim\limits_{t \to 0} \Bigg( f (x, u +\alpha t w) w \Bigg)dx \\
				= \int_{\mathbb{R}^2} \Bigg(f(x, u) w \Bigg) dx.
		\end{equation*}
			It follows that  the functional $J$ is Gateaux differentiable and for all $w\in C_{0}^{\infty}(\mathbb{R}^2)$,
		\begin{equation*}
			J'(u)w= \int_{\mathbb{R}^2} \Bigg(\nabla u \nabla w +uw  -f(x,u)w\Bigg)dx.
		\end{equation*}
		\textbf{The Gateaux derivative $J'$ is continuous}. Assume that $(u_n)\to u$ in $H^1(\mathbb{R}^2)$. Then, by the Sobolev embedding theorem, $(u_n)\to u$ in $L^2(\mathbb{R}^2)$. 
		We have 
		\begin{align*}
			\Big|(J'(u_n)-J'(u))w\Big| &= \Big|\int_{\mathbb{R}^2} \Big( \Big(\nabla u_n -\nabla u\Big)w +(u_n-u)w   - \Big(f(x,u_n)-f(x,u)\Big)w\Big)\Big|\,dx\\
			&\le\|u_n-u\|\|w\| + \int_{\mathbb{R}^2} \Big| \Big(f(x,u_n)-f(x,u)\Big)w\Big|\,dx.
		\end{align*}
		Using the same arguments that led us to $(\ref{eq 10})$, we obtain that for every $w \in C_{0}^{\infty}(\mathbb{R}^2) $
		\begin{equation*}
			\int_{\mathbb{R}^2} f(x, u_n(x))w \,dx \to \int_{\mathbb{R}^2}  f(x, u(x))w\,dx, \;\; \text{as}\;\; n \to \infty.
		\end{equation*}
		It follows that 
		\[J'(u_n)-J'(u)\to 0, \;\; \text{as}\;\; n\to \infty.\]
	\end{proof}
	\begin{remark}
	    By Definition $\ref{def 4.1}$ and Lemma $\ref{lem 4.6}$, we see that $\mathbb{Z}_2-$critical points of the functional $J$ are exactly the weak solutions to problem $(P)$.
	\end{remark}
	\begin{lemma}
		\label{lem 4.7}
Under assumptions of Lemma $\ref{lem 4.5}$, for every $u\in H^1(\mathbb{R}^2)$, the equivariant weak slope $|d_{\mathbb{Z}_2}J|(u)$ satisfies
	\begin{equation*}
		|d_{\mathbb{Z}_2}J|(u)\ge \sup \Big\{  \int_{\mathbb{R}^2} \Big(\nabla u \nabla w +uw  -f(x,u)w\Big)\,dx \;|\; w \in C_{0}^{\infty}(\mathbb{R}^2), \; \|w\|\le 1      \Big \}.
	\end{equation*}
	\end{lemma}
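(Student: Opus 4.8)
The plan is to derive this lower bound directly from Theorem \ref{theo 2.1}, exploiting the Gateaux differentiability and the continuity of $J'$ that were established in Lemma \ref{lem 4.6}. First I would record two reductions. Since $J$ is continuous on $H^1(\mathbb{R}^2)$ (Lemma \ref{lem 4.5}), Theorem \ref{theo 2.1} applies with the normed space taken to be $E = H^1(\mathbb{R}^2)$. Moreover, $J$ is even, hence $\mathbb{Z}_2$-invariant for the antipodal action $G = \mathbb{Z}_2 \simeq \{\mathrm{id}, -\mathrm{id}\}$, so Remark \ref{rmk 2} gives $|d_{\mathbb{Z}_2}J|(u) = |dJ|(u)$; it therefore suffices to bound the ordinary weak slope $|dJ|(u)$ from below, and Theorem \ref{theo 2.1} is available for this purpose.

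Next I would fix $v \in C_0^\infty(\mathbb{R}^2)$ and apply Theorem \ref{theo 2.1} at the point $u$ in the direction $-v$ (the sign is chosen so that the lower bound comes out with the desired orientation), obtaining
\[
\|v\|\,|dJ|(u) = \|-v\|\,|dJ|(u) \ge -\limsup_{w\to u}\overline{D}_{+}J(w)(-v).
\]
The key step is to identify the right-hand side. By Lemma \ref{lem 4.6}, for each $w \in H^1(\mathbb{R}^2)$ the two-sided Gateaux limit exists and equals $J'(w)v = \int_{\mathbb{R}^2}(\nabla w\nabla v + wv - f(x,w)v)\,dx$; since the full limit exists, the one-sided upper limit defining $\overline{D}_{+}J(w)(-v)$ collapses to the directional derivative in direction $-v$, which by linearity of the integral in the direction is $-J'(w)v$. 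Invoking the continuity of $J'$ (also from Lemma \ref{lem 4.6}), namely $J'(w)v \to J'(u)v$ as $w\to u$ in $H^1$, I obtain
\[
-\limsup_{w\to u}\overline{D}_{+}J(w)(-v) = -\limsup_{w\to u}\bigl(-J'(w)v\bigr) = J'(u)v.
\]

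Combining the two displays yields $\|v\|\,|dJ|(u) \ge J'(u)v$ for every $v \in C_0^\infty(\mathbb{R}^2)$ (the case $|dJ|(u) = +\infty$ being trivial). Restricting to $\|v\|\le 1$ and using $|dJ|(u)\ge 0$ gives $|dJ|(u) \ge \|v\|\,|dJ|(u) \ge J'(u)v$; taking the supremum over all such $v$ and recalling $|d_{\mathbb{Z}_2}J|(u) = |dJ|(u)$ produces exactly the asserted inequality. I expect the main obstacle to be the careful justification that the upper limit $\overline{D}_{+}$ genuinely reduces to $-J'(w)v$ and that the continuity of $J'$ furnished by Lemma \ref{lem 4.6} is strong enough to pass the limit $w\to u$ through the outer $\limsup$; a secondary point requiring attention is the bookkeeping of signs, which is precisely why one applies Theorem \ref{theo 2.1} in the direction $-v$ rather than $v$.
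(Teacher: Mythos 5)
Your proposal is correct and follows essentially the same route as the paper: both reduce $|d_{\mathbb{Z}_2}J|(u)$ to $|dJ|(u)$ via Remark \ref{rmk 2}, apply Theorem \ref{theo 2.1} using the Gateaux differentiability and continuity of $u\mapsto J'(u)w$ from Lemma \ref{lem 4.6} to get $\|w\|\,|dJ|(u)\ge -J'(u)w$, and then flip the sign by replacing $w$ with $-w$ (linearity of $J'(u)$). Your write-up is in fact slightly more explicit than the paper's about why $\limsup_{w\to u}\overline{D}_{+}J(w)(v)$ collapses to $J'(u)v$, but the argument is the same.
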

	\begin{proof}
By Lemma $\ref{lem 4.5}$, the functional $J$ is well defined and continuous. Moreover, Lemma $\ref{lem 4.6}$ implies that for every $u\in H^1(\mathbb{R}^2)$ and $w \in C_{0}^{\infty}(\mathbb{R}^2)$ there exists $J'(u)$ such that
	\begin{equation*}
			J'(u)w= \int_{\mathbb{R}^2} \Bigg(\nabla u \nabla w +uw  -f(x,u)w\Bigg)dx
	\end{equation*}
	and the function $u\mapsto J'(u) w$ is continuous from $H^1(\mathbb{R}^2)$ into $\mathbb{R}$. From Remark $\ref{rmk 2}$ and Theorem $\ref{theo 2.1}$, we deduce that 
	\begin{equation*}
	-J'(u)w\le |d_{\mathbb{Z}_2}J|(u)\;\; \text{whenever}\;\; \|w\|\le 1. 
	\end{equation*}
	Since the function $w\mapsto J'(u) w$ is linear, then if we set $\zeta := -w$, we have 
	\begin{equation*}
		J'(u)\zeta = J'(u)(-w)=-J'(u)w\le |d_{\mathbb{Z}_2}J|(u)\;\; \text{whenever}\;\; \|\zeta\| =\|-w\|=\|w\|\le 1. 
	\end{equation*}
	\end{proof}
	
	\begin{lemma}
    \label{lem 4.8}
Under assumptions $(f_1), (f_2),(f_3)$ and $(f_4)$, the functional $J$ given in $(\ref{eq 5})$ satisfies $\mathbb{Z}_2-(PS)_c$ for every $c\in \mathbb{R}$.
	\end{lemma}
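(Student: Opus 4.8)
The plan is to verify the $\mathbb{Z}_2$-$(PS)_c$ condition directly: given a sequence $(u_n)\subset E=H^1_{rad}(\mathbb{R}^2)$ with $J(u_n)\to c$ and $|d_{\mathbb{Z}_2}J|(u_n)\to 0$, I would extract a strongly convergent subsequence. The first step is to turn the weak-slope information into a gradient bound. By Lemma \ref{lem 4.7}, $|d_{\mathbb{Z}_2}J|(u_n)$ dominates $\sup\{J'(u_n)w : w\in C_0^\infty(\mathbb{R}^2),\ \|w\|\le 1\}$, and since $w\mapsto J'(u_n)w$ is a bounded linear functional on $H^1(\mathbb{R}^2)$ while $C_0^\infty(\mathbb{R}^2)$ is dense and the functional is linear in $w$, this supremum equals $\|J'(u_n)\|_{(H^1)^*}$. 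Hence $\|J'(u_n)\|_{(H^1)^*}\le |d_{\mathbb{Z}_2}J|(u_n)\to 0$, so $J'(u_n)w\to 0$ uniformly on the unit ball and in particular $\langle J'(u_n),u_n\rangle = o(\|u_n\|)$.

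The second, and hardest, step is boundedness of $(u_n)$ in $E$, for which I would exploit the Ambrosetti--Rabinowitz structure $(f_2)$. Writing $J(u_n)-\frac1\mu\langle J'(u_n),u_n\rangle = (\frac12-\frac1\mu)\|u_n\|^2 + \int_{\mathbb{R}^2}\big(\frac1\mu f(x,u_n)u_n - F(x,u_n)\big)dx$, the integrand is nonnegative on $\{|u_n|\ge R\}$ by $(f_2)$. The genuine difficulty, absent on bounded domains, is that $\{|u_n|<R\}$ has infinite measure, so the remaining integral cannot be bounded by a mere constant. I would split $\{|u_n|<R\}$ into $\{|u_n|\le c_\varepsilon\}$, where $(f_1)$ gives $|\frac1\mu f(x,u_n)u_n - F(x,u_n)|\le \varepsilon u_n^2$, and $\{c_\varepsilon<|u_n|<R\}$, where the integrand is bounded while Chebyshev's inequality bounds the measure by $\|u_n\|_2^2/c_\varepsilon^2$. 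Since both pieces are then $O(\|u_n\|_2^2)$, the cleanest way to close the argument is by contradiction: if $\|u_n\|\to\infty$, set $v_n=u_n/\|u_n\|$ and extract $v_n\rightharpoonup v$; then $J(u_n)/\|u_n\|^2\to0$ forces $\int F(x,u_n)/\|u_n\|^2\,dx\to\frac12$, whereas the superquadraticity coming from $(f_2)$ (that $F(x,t)/t^2\to\infty$ as $|t|\to\infty$) together with Fatou's lemma rules out $v\ne0$, and the case $v=0$ is excluded using the $L^p$-compactness below and the growth estimates. I expect this step to be the main obstacle.

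Once $\|u_n\|\le M$, the third step recovers compactness. Up to a subsequence $u_n\rightharpoonup u$ in $E$, and by the Strauss compact embedding (Theorem \ref{theo 4.1}) $u_n\to u$ in $L^p(\mathbb{R}^2)$ for every $2<p<\infty$ and a.e.\ in $\mathbb{R}^2$. The main technical point is then to pass to the limit in the nonlinear term, i.e.\ to show $\int_{\mathbb{R}^2} f(x,u_n)(u_n-u)\,dx\to0$. I would estimate $f(x,u_n)$ by Lemma \ref{lem 4.2} as $|f(x,u_n)|\le 2\varepsilon|u_n|+c(e^{\beta u_n^2}-1)$: the linear part contributes at most $2\varepsilon\|u_n\|_2\|u_n-u\|_2=O(\varepsilon)$, while the exponential part is handled by Hölder together with Lemma \ref{lem 4.4} and the Trudinger--Moser inequality $(\ref{eq 6})$. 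Crucially, the subcritical growth $(f_0)$ lets me fix $\beta$ small enough that $\beta M^2<4\pi$, so that $\|e^{\beta u_n^2}-1\|_{L^{q_0}}$ is uniformly bounded; combined with $\|u_n-u\|_{L^\alpha}\to0$ from Strauss, the exponential part tends to $0$, and letting $\varepsilon\to0$ gives the claim.

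Finally, I would deduce strong convergence. Since $\langle J'(u_n),u_n-u\rangle\to0$ (because $\|J'(u_n)\|_{(H^1)^*}\to0$ and $\|u_n-u\|$ is bounded) and $\langle J'(u_n),u_n-u\rangle = (u_n,u_n-u)-\int_{\mathbb{R}^2} f(x,u_n)(u_n-u)\,dx$, the previous step forces $(u_n,u_n-u)\to0$. As $u_n\rightharpoonup u$ gives $(u_n,u)\to\|u\|^2$, this yields $\|u_n\|^2\to\|u\|^2$, and together with weak convergence in the Hilbert space $E$ we obtain $u_n\to u$ strongly. The limit $u$ is then a $\mathbb{Z}_2$-critical point in the sense of Definition \ref{def 2.6}, so $J$ satisfies $\mathbb{Z}_2$-$(PS)_c$; the argument is uniform in $c\in\mathbb{R}$ precisely because the subcritical condition $(f_0)$ decouples the Trudinger--Moser threshold from the energy level.
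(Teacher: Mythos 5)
Your overall architecture coincides with the paper's: Lemma \ref{lem 4.7} converts the equivariant weak slope into a bound on $\|J'(u_n)\|_{(H^1)^*}$, the Ambrosetti--Rabinowitz structure gives boundedness, Strauss compactness plus the growth estimates give $\int_{\mathbb{R}^2}\bigl(f(x,u_n)-f(x,u)\bigr)(u_n-u)\,dx\to 0$, and the Hilbert-space identity $\|u_n-u\|^2=\langle J'(u_n)-J'(u),u_n-u\rangle+\int\bigl(f(x,u_n)-f(x,u)\bigr)(u_n-u)\,dx$ closes the argument. Your steps 1, 3 and 4 are correct; in fact your step 3 (H\"older with Lemma \ref{lem 4.4} and the Trudinger--Moser bound \eqref{eq 6}, choosing $\beta$ so small that the exponential factor is uniformly integrable along the bounded sequence, then using $\|u_n-u\|_{L^\alpha}\to 0$ from Theorem \ref{theo 4.1}) is more robust than the paper's route, which dominates $|u_n|$ by a fixed $h\in H^1$ and then applies dominated convergence with the test function $w=u_n-u$ varying with $n$.

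The genuine gap is in your step 2, and you have half-diagnosed it yourself. You are right that the paper's inequality $\int_{\mathbb{R}^2}\bigl(\tfrac1\mu f(x,u_n)u_n-F(x,u_n)\bigr)dx\ge 0$ does not follow from $(f_2)$ as stated, since the pointwise inequality is only assumed for $|t|\ge R$ and $\{|u_n|<R\}$ has infinite measure; the paper simply asserts it. But your proposed repair does not close either. The splitting via $(f_1)$ and Chebyshev only yields a lower bound of the form $-C_R\|u_n\|_{L^2}^2$ with a constant $C_R$ that cannot be made smaller than $\tfrac12-\tfrac1\mu$, so you correctly fall back on the normalization $v_n=u_n/\|u_n\|$; there the case $v\ne 0$ is indeed killed by Fatou and the superquadraticity $F(x,t)/t^2\to\infty$, but the case $v=0$ is exactly where the argument is stuck, and ``excluded using the $L^p$-compactness and the growth estimates'' is not a proof. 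Concretely: Strauss gives $v_n\to 0$ in $L^p$ only for $p>2$, not in $L^2$, and the error terms you must control on $\{|u_n|<R\}$ are of order $\|v_n\|_{L^2}^2$ and $\|u_n\|\,\|v_n\|_{L^3}^3$, neither of which is forced to vanish. So either one strengthens $(f_2)$ to hold for all $t\ne 0$ (which is what the paper's computation implicitly uses, and is the standard hypothesis in the references \cite{OS,dMS}), or one supplies a genuinely new argument for the $v=0$ alternative; as written, this step would fail.
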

	\begin{proof}
	Let $(u_n)\subset H^1(\mathbb{R}^2)$ such that $d:=\sup\; J(u_n)< \infty$ and $|d_{\mathbb{Z}_2}J|(u_n)\to 0$, as $n\to \infty$. We want to show that the sequence $(u_n)$ has a convergent subsequence. By Lemma $\ref{lem 4.7}$,
	\begin{equation*}
	\|J'(u_n)\| \le |d_{\mathbb{Z}_2}J|(u_n) \;\; \text{with}\;\;	J'(u_n)w= \int_{\mathbb{R}^2} \Bigg(\nabla u_n \nabla w +u_nw  -f(x,u_n)w\Bigg)dx.
	\end{equation*}
	For sufficiently large $n\in \mathbb{N}$, we have $\|J'(u_n) \|\le1$ and $J(u_n)\le d+1$. Hence, for such $n$, we have 
	\begin{equation*}
		d+1 + \|u_n\| \ge J(u_n)- \dfrac{1}{\mu}J'(u_n)u_n,
        \end{equation*}
for every $\mu >2$.\\
    We have 
    \begin{align*}
    J(u_n)- \dfrac{1}{\mu}J'(u_n)u_n &= \dfrac{1}{2}\|u_n\|^2 -\int_{\mathbb{R}^2} F(x,u_n)dx -\dfrac{1}{\mu} \int_{\mathbb{R}^2} \Bigg(\nabla u_n \nabla u_n +|u_n|^2  -f(x,u_n)u_n\Bigg)dx\\
		&= \Big( \dfrac{1}{2}- \dfrac{1}{\mu} \Big)\|u_n\|^2 + 	\int_{\mathbb{R}^2}\Big( \frac{1}{\mu}f(x, u_n)u_n -F(x,u_n)\Big)  dx\\
		&\ge\Big( \dfrac{1}{2}- \dfrac{1}{\mu} \Big)\|u_n\|^2 \;\; (\text{by assumption}\;\; (f_2)).
	\end{align*}
    
    It follows that 
    \begin{equation*}
       d+1 + \|u_n\| \ge  \Big( \dfrac{1}{2}- \dfrac{1}{\mu} \Big)\|u_n\|^2.
    \end{equation*}
    
	Thus $(u_n)$ is bounded in $H^1(\mathbb{R}^2)$. Since $(u_n)$ is bounded, there exists a subsequence still denoted $(u_n)$, and there is $u$ in $H^1(\mathbb{R}^2)$ such that 
    \begin{equation*}
	(u_n) \to u \;\; \text{weakly in}\;\; E, \;\; \text{as}\;\; n \to \infty, 
\end{equation*}
\begin{equation*}
	(u_n) \to u, \;\; \text{in}\;\; L_{loc}^p(\mathbb{R}^2), \;\;\text{for}\;\; 2\le p<\infty,\;\; \text{as}\;\; n \to \infty,
\end{equation*}
\begin{equation*}
	(u_n(x)) \to u(x), \;\; \text{almost everywhere}, \;\;\text{in}\;\; \mathbb{R}^2, \;\; \text{as}\;\; n \to \infty.
\end{equation*}

    We have 
    \begin{multline*}
        J'(u_n)(u_n-u) -J'(u)(u_n-u)= \int_{\mathbb{R}^2} \Bigg(\nabla u_n \nabla (u_n-u) +u_n(u_n-u) -f(x,u_n)(u_n-u)\Bigg)dx \\- \int_{\mathbb{R}^2} \Bigg(\nabla u \nabla (u_n-u)+u(u_n-u)  -f(x,u)(u_n-u)\Bigg)dx.
    \end{multline*}
    
    This implies that
    \begin{multline*}
     J'(u_n)(u_n-u) -J'(u)(u_n-u) = \int_{\mathbb{R}^2} \Bigg( \nabla(u_n-u) \nabla(u_n-u) +|u_n-u|^2\Bigg)dx \\  -\int_{\mathbb{R}^2} \Big(f(x,u_n)-f(x,u)\Big)(u_n-u)\,dx.
    \end{multline*}
    
    That is, 
    \begin{equation*}
         J'(u_n)(u_n-u) -J'(u)(u_n-u) = \|u_n-u\|^2 - \int_{\mathbb{R}^2} \Big(f(x,u_n)-f(x,u)\Big)(u_n-u)\,dx.
    \end{equation*}
    
    Hence,
	\begin{equation*}
		\|u_n-u\|^2 =   J'(u_n)(u_n-u) -J'(u)(u_n-u)+ \int_{\mathbb{R}^2} \Big(f(x,u_n)-f(x,u)\Big)(u_n-u)\,dx.
	\end{equation*}
    
On the one hand, we claim that 
	\begin{equation*}
	\int_{\mathbb{R}^2} f(x, u_n(x))u_n(x) \,dx \to \int_{\mathbb{R}^2}  f(x, u(x))u(x)\, dx, \;\; \text{as}\;\; n \to \infty.
\end{equation*}
Indeed:\\
$(i)$ By Lemma $\ref{lem 4.2}$, for every $\varepsilon>0$, there is $c>0$ such that 
\begin{equation*}
	\big|f(x,u_n(x)) u_n(x)\big| \le 2\varepsilon |u_n(x)|^2 + c|u_n(x)|\big(e^{\beta (u_n(x))^2}-1\big), \;\; \text{almost everywhere},\;\;\text{in}\;\; \mathbb{R}^2.
\end{equation*}
$(ii)$ Since $f(x,.)$ is continuous for every $x\in \mathbb{R}^2$, then
\begin{equation*}
	f(x, u_n(x))u_n(x) \to f(x, u(x))u(x),\;\;\text{almost everywhere},\;\; \text{in}\;\; \mathbb{R}^2.
\end{equation*}
and
\[ 2\varepsilon |u_n(x)|^2 + c|u_n(x)|\big(e^{\beta (u_n(x))^2}-1\big) \to 2\varepsilon |u(x)|^2 + c|u(x)|\big(e^{\beta (u(x))^2}-1\big) , \;\; \text{almost everywhere},\;\;\text{in}\;\; \mathbb{R}^2.\]
$(iii)$ By Remark $\ref{rm 5}$, 
\[\lim_{n\to \infty} \int_{\mathbb{R}^2} \Big(2\varepsilon |u_n(x)|^2 + c|u_n(x)|\big(e^{\beta (u_n(x))^2}-1\big) \Big)dx = \int_{\mathbb{R}^2}\Big( 2\varepsilon |u(x)|^2 + c|u(x)|\big(e^{\beta (u(x))^2}-1\big)\Big)dx <\infty.\]
 By Generalized Lebesgue dominated convergence theorem, we
obtain
\begin{equation*}
	\int_{\mathbb{R}^2} f(x, u_n(x)) u_n(x)\, dx \to \int_{\mathbb{R}^2}  f(x, u(x))u(x)\, dx, \;\; \text{as}\;\;n \to \infty,
\end{equation*} It follows that
\begin{equation*}
	\int_{\mathbb{R}^2} \Big(f(x,u_n)-f(x,u)\Big)(u_n-u)\,dx \to 0, \;\; \text{as}\;\; n\to \infty.
\end{equation*}

On the other hand, since  $(u_n)\rightharpoonup u$, as $n\to \infty$, then \begin{equation*}
	 J'(u_n)(u_n-u) -J'(u)(u_n-u) \to 0, \;\; \text{as}\;\; n\to \infty.
\end{equation*}

We conclude that 
\begin{equation*}
	\|u_n-u\|^2 \to 0, \;\; \text{as}, \;\; n\to \infty.
\end{equation*}
Therefore, $(u_n)$ has a convergent subsequence and
by Definition $\ref{def 2.6}$, the functional $J$ satisfies $\mathbb{Z}_2-(PS)_c$ for every $c\in \mathbb{R}$.
	\end{proof}
	\begin{lemma}[see \cite{Wi}]
		\label{lem 4.9}
	If $2 < p< \infty$, then for every $u\in E$, we have 
	\begin{equation*}
		\alpha_k(p):= \underset{\underset{\|u\|=1}{u\in Z_k}}{\sup}\; |u|_p \to 0, \; k\to \infty.
	\end{equation*}
	\end{lemma}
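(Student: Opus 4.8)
The plan is to exploit the compact embedding of Theorem \ref{theo 4.1}, which is precisely the feature that distinguishes the radial space $E = H^1_{rad}(\mathbb{R}^2)$ from $H^1(\mathbb{R}^2)$ and makes the statement true. First I would observe that the sequence $\big(\alpha_k(p)\big)$ is nonincreasing: since $Z_{k+1} \subseteq Z_k$, the supremum over the smaller set can only decrease. Being nonnegative and nonincreasing, it converges to some $\alpha \ge 0$, and it suffices to show $\alpha = 0$. To this end, for each $k$ I would pick a near-maximizer $u_k \in Z_k$ with $\|u_k\| = 1$ and
\begin{equation*}
|u_k|_p \ge \alpha_k(p) - \tfrac{1}{k}.
\end{equation*}

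Next, since $(u_k)$ is bounded in the Hilbert space $E$, I would extract a subsequence (still denoted $(u_k)$) converging weakly, $u_k \rightharpoonup u$ in $E$. The key step is to identify the weak limit as $u = 0$. This follows from the orthogonality built into the definition of $Z_k$: because $Z_k = \overline{\oplus_{j \ge k} E_j}$ is orthogonal to $e_0, \dots, e_{k-1}$, one has $\langle u_k, e_j \rangle = 0$ for every $k > j$. Letting $k \to \infty$ and using weak convergence gives $\langle u, e_j \rangle = 0$ for all $j$, and since $(e_j)$ is an orthonormal basis of $E$, I conclude $u = 0$.

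Finally, I would invoke the compact embedding $H^1_{rad}(\mathbb{R}^2) \subset L^p(\mathbb{R}^2)$ from Theorem \ref{theo 4.1}, valid for $2 < p < \infty$. Under a compact embedding, the weak convergence $u_k \rightharpoonup 0$ in $E$ forces strong convergence $u_k \to 0$ in $L^p(\mathbb{R}^2)$, that is $|u_k|_p \to 0$. Combining this with the choice of $u_k$ yields
\begin{equation*}
\alpha = \lim_{k \to \infty} \alpha_k(p) \le \lim_{k \to \infty} \Big( |u_k|_p + \tfrac{1}{k} \Big) = 0,
\end{equation*}
so $\alpha = 0$, which is the desired conclusion.

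I expect the main (and essentially only) substantive point to be the passage from weak to strong convergence, which rests entirely on the compactness in Theorem \ref{theo 4.1}; in the nonradial space $H^1(\mathbb{R}^2)$ this embedding fails to be compact and the conclusion would be false. The verification that the weak limit vanishes is the secondary ingredient, and it is a routine consequence of the orthonormal-basis structure of $(e_j)$ together with the defining orthogonality of the subspaces $Z_k$.
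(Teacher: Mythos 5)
Your proof is correct and is essentially the standard argument behind the cited result (Lemma~3.8 in Willem's \emph{Minimax Theorems}), which the paper does not reprove: monotonicity of $\alpha_k(p)$, near-maximizers $u_k\in Z_k$ with $\|u_k\|=1$, identification of the weak limit as $0$ via orthogonality to the basis vectors $e_j$, and the compact embedding $H^1_{rad}(\mathbb{R}^2)\subset L^p(\mathbb{R}^2)$ of Theorem~\ref{theo 4.1} to upgrade weak convergence to strong $L^p$ convergence. You also correctly identify that the radial compactness of Strauss's theorem is the ingredient replacing Rellich--Kondrachov from the bounded-domain version, and that the conclusion would fail in the full space $H^1(\mathbb{R}^2)$.
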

\subsection{Proof of Theorem $\ref{theo 4.3}$} Now we are ready to prove Theorem $\ref{theo 4.3}$.
\begin{proof}
From $(\star)$, assumption $(A_1)$ of Theorem $\ref{theo 3.2}$ is satisfied. By Lemma $\ref{lem 4.8}$, the geometric assumption $(A_4)$ also holds for $f=J$ and $G=\mathbb{Z}_2$. To prove Theorem $\ref{theo 4.3}$, it now suffices to verify the geometric assumptions $(A_2)$ and $(A_3)$ for $f=J$, which then allows us to apply Theorem $\ref{theo 3.2}$ and obtain $\mathbb{Z}_2-$critical points of the functional $J$ restricted to $E$.\\

		From $(f_2)$ (see \cite{ZL}), there is $C_4>0$ such that 
		\begin{equation}
			\label{eq 11}
			F(x,t)\ge C_4 |t|^{\gamma}.
		\end{equation}

Let $u\in Y_k$. In view of $(\ref{eq 11})$, we have 
		\begin{align*}
			J(u)&= \dfrac{\|u\|^2}{2} - \int_{\mathbb{R}^2} F(x,u) dx\\
			&\le\dfrac{\|u\|^2}{2} -C_4|u|_{\gamma}^\gamma.
		\end{align*}
		Since on the finite-dimensional spaces $Y_k$ all norms are equivalent (there is $C_5>0$ such that $ \|u\| \le C_5|u|_{\gamma}$), we obtain 
		\begin{equation*}
			J(u)\le \dfrac{\|u\|^2}{2} -C_6\|u\|^\gamma,
		\end{equation*}
		for some $C_6>0$.
		Thus, the relation $(A_2)$ in Theorem $\ref{theo 3.2}$ is satisfied for $f=J$ for every $\rho_k>0$ large enough. \\
		From Lemma $\ref{lem 4.3}$, for any $\varepsilon>0$, for each $\delta\ge 1$, there exists $C>0$ such that
		\begin{equation}
        \label{eq 12}
			|F(x,u)|\le \varepsilon \dfrac{u^2}{4} + C|u|^\delta\big(e^{\beta u^2}-1\big).
		\end{equation}
		
	From  Remark $\ref{rm 5}$, we have \begin{equation}
    \label{eq 13}
	\int_{\mathbb{R}^2} C|u|^\delta\big(e^{\beta u^2}-1\big)dx \le c_0|u|_{\tau}^{\delta},
	\end{equation}
  where  $ 2\le \tau< \infty$ and for some $c_0>0$.

    On $Z_k$, for $2<p<\infty$ and for $2< \tau <\infty$, we have 
	\begin{align*}
J(u) &\ge \dfrac{\|u\|^2}{2} -\int_{\mathbb{R}^2} |F(x,u)|dx\\
&\ge\dfrac{\|u\|^2}{2} - \dfrac{\varepsilon}{4} \int_{\mathbb{R}^2} |u|^2 dx - c_0|u|^p_{\tau}\;\; \text{(by equations (\ref{eq 12}) and (\ref{eq 13})})\\
&= \dfrac{\|u\|^2}{2} -\dfrac{\varepsilon}{4}|u|_{2}^2- c_0 |u|^p_{\tau}.
	\end{align*}
	By the Sobolev embedding theorem, there is $c_2>0$ such that $|u|_2^2 \le c_2 \|u\|^2$. We may choose $\varepsilon$ so that $c_2 \varepsilon<1$. Then,
    \begin{equation*}
        J(u) \ge \dfrac{\|u\|^2}{2} -\dfrac{\|u\|^2}{4} -c_0 |u|^p_{\tau}
    \end{equation*}
For $\alpha_k(p)$ from Lemma $\ref{lem 4.9}$, let us choose $r_k:=\Bigg( 2c_0 p \Big(\alpha_k(p)\Big) \Bigg)^{\frac{1}{2-p}}$. For $v \in Z_k$ with $\|v\|=r_k$ (in fact, we take all $v:= r_k u$ with $\|u\|=1$), we obtain
    \begin{align*}
        J(v) &\ge \dfrac{\|r_ku\|^2}{4} -c_0|r_ku|^p_\tau \\
        &= \dfrac{(r_k)^{2}}{4}\|u\|^2-c_0(r_k)^p|u|^p_{\tau}.
    \end{align*}
    Since $\|u\|=1$, we have
    \begin{equation*}
        |u|^p_{\tau} \le \Big(\alpha_k(p)\Big)^p.
    \end{equation*}
    
    Since $\frac{(r_k)^{2-p}}{p}= 2c_0\Big(\alpha_k(p)\Big)^p$, then 
\begin{align*}
J(v) &\ge \dfrac {(r_k)^{2}}{4} - c_0(r_k)^p\Big(\alpha_k(p)\Big)^p\\
&=\Big( \dfrac{1}{4}-\dfrac{1}{2p}\Big)(r_k)^2\\
	&=\Big(\dfrac{1}{4}-\dfrac{1}{2p}\Big)\Bigg(2c_0p\Big(\alpha_k(p)\Big)^p\Bigg)^\frac{2}{2-p}.
\end{align*}
	Since by Lemma $\ref{lem 4.9}$, $\alpha_k \to 0$, as $k\to \infty$, and $\frac{1}{4}-\frac{1}{2p}>0$, we obtain that $J(v) \to  +\infty$ if $v\in Z_k$ such that $\|v\| =r_k$. The relation $(A_3)$ in Theorem $\ref{theo 3.2}$ is thus proved for $f=J$.\\
    
	By Lemma $\ref{lem 4.5}\, (2)$, the functional $J$ is $\mathbb{Z}_2-$invariant. Using Theorem $\ref{theo 3.2}$, the functional $J{\Big|_E}$ admits an unbounded sequence of $\mathbb{Z}_2-$critical values. In orther words, problem $(P)$ possesses a sequence of radial solutions $(u_k)$ such that $J(u_k)\to \infty$, as $k\to \infty$. By the principle of symmetric criticality (Theorem $\ref{theo 4.2}$), $\mathbb{Z}_2-$critical points of $J$ in $E$ are $\mathbb{Z}_2-$critical points of $J$ in $H^1(\mathbb{R}^2)$. Hence, problem $(P)$ has a sequence of solutions $(u_k)$ in $H^1(\mathbb{R}^2)$ such that $J(u_k)\to \infty$, as $k\to \infty$.  This completes the proof of Theorem $\ref{theo 4.3}$.
    \end{proof}
    
	\section*{Conclusion}
In this work, we presented a continuous version of the classical Fountain theorem. Using the deformation lemma for continuous functionals established in \cite{KV}, we proved a minimax prinple theorem, which played a key role in the proof of the aforementioned result. In future work, we aim to generalize our result to strongly indefinite continuous functionals, that is, continuous functionals of the form $f(u) = \dfrac{1}{2} \langle Lu, u \rangle - \Psi(u)$ defined on a Hilbert
 space $X$, where $L : X \to X$ is a self-adjoint operator whose negative and positive eigenspaces are both infinite-dimensional. The study of such functionals is motivated by several problems in mathematical physics.
\section*{Data Availability Statement}
Data sharing is not applicable to this article as no datasets were generated or analyzed.
\section*{Conflicts of Interest}
The authors declare no conflicts of interest.
\section*{Author Contributions}
All authors made essential contributions to the research.
\section*{Funding}
 This work was funded by a  grant from the Natural Sciences and Engineering Research Council of Canada.
\section*{Acknowledgments}
\textit{Software Used}. No AI has been used to prepare the manuscript, as a whole or in part.

		\newpage
	
\end{document}